\definecolor{webgreen}{rgb}{0,.5,0}
\definecolor{webbrown}{rgb}{.6,0,0}
\tikzset{circle node/.style = {circle,inner sep=1pt,draw, fill=white},
        X node/.style = {fill=white, inner sep=1pt},
        dot node/.style = {circle, draw, inner sep=5pt}
        }
\newtheorem{theorem}{Theorem}
\newtheorem{lemma}[theorem]{Lemma}
\newtheorem{proposition}[theorem]{Proposition}
\newtheorem{corollary}[theorem]{Corollary}
\theoremstyle{definition}
\newtheorem{example}[theorem]{Example}
\newcommand{\seqnum}[1]{\href{http://oeis.org/#1}{\underline{#1}}}
\begin{document}

\begin{center}
\vskip 1cm{\LARGE\bf Riordan arrays and Jacobi and Thron continued fractions} \vskip 1cm \large
Paul Barry\\
School of Science\\
Waterford Institute of Technology\\
Ireland\\
\href{mailto:pbarry@wit.ie}{\tt pbarry@wit.ie}
\end{center}
\vskip .2 in

\begin{abstract} We show that certain Riordan arrays have generating functions that can be expressed as continued fractions of Jacobi and Thron type. We investigate the inverses of such arrays, which in certain circumstances can also have generating functions representable as continued fractions. Links to orthogonal polynomial moment sequences, and to Laurent biorthogonal polynomials are developed. We show that certain Riordan group involutions can be defined by continued fractions. We also show how simple transformations of the Jacobi continued fractions can lead to exponential Riordan arrays. Finally, by way of contrast, we look at the case of some non Riordan arrays that are of combinatorial significance, including the Narayana numbers. \end{abstract}

\section{Introduction}

A Riordan array \cite{Book, SGWW} can be defined as a couple $(g(x), f(x))$ of (formal) power series where
$$g(x)=g_0+g_1 x + g_2 x^2 +\cdots,$$
$$f(x)=f_1x+f_2x^2+f_3x^3+\cdots,$$ with $g_0 \ne 0$, $f_0=0$ and $f_1 \ne 0$.
To each such pair we can associate the matrix whose $(n,k)$-th element $a_{n,k}$ is given by
$$a_{n,k}=[x^n] g(x)f(x)^k$$ where $[x^n]$ is the functional on the space of power series that extracts the coefficient of $x^n$. Thus $[x^n]f(x)=f_n$.
We can define a product for pairs of Riordan arrays $(g(x), f(x))$ and $(u(x), v(x))$ as follows.
$$(g(x), f(x)) \cdot (u(x), v(x))= (g(x) u(f(x)), v(f(x)).$$
In terms of the corresponding matrices, this is realised as the usual matrix product. We can define an inverse for this product, given as follows.
$$(g(x), f(x))^{-1}=\left(\frac{1}{g(\bar{f}(x))}, \bar{f}(x)\right)$$ where
$\bar{f}(x)$ is the compositional inverse of $f(x)$. That is, it is the solution $u(x)$ of the equation $f(u)=x$ which satisfies $u(0)=0$. The existence of such an inverse is assured by the conditions $f_0=0$ and $f_1 \ne 0$.
The matrix corresponding to $(g(x), f(x))^{-1}$ is then the inverse of the matrix corresponding to $(g(x), f(x))$. The identity element is given by $(1,x)$, and with this the set of Riordan arrays becomes a group, called the Riordan group. Where no confusion can arise, we often do not distinguish between the Riordan array $(g(x), f(x))$ and its matrix representation.

The Riordan group has an action on the ring of power series as follows.
$$(g(x), f(x))\cdot h(x)= g(x)h(f(x)).$$
This is often called \emph{the fundamental theorem of Riordan arrays}. In matrix terms, the left hand side corresponds to multiplying the column vector $(h_0, h_1, h_2,\ldots)^T$ by the matrix representing $(g(x), f(x))$, with the result being the column vector whose elements are obtained by expanding the power series $g(x)h(f(x))$.

The bivariate generating function $G(x,y)=\sum_{n,k} a_{n,k}x^n y^k$ of $(g(x), f(x))$ corresponds to the result of the action
$$ (g(x), f(x)) \cdot \frac{1}{1-yx}=\frac{g(x)}{1-y f(x)}.$$
That is, we have
$$a_{n,k}=[x^n][y^k] \frac{g(x)}{1-y f(x)}.$$
In fact, we have
\begin{align*} [x^n][y^k] \frac{g(x)}{1-y f(x)}&= [x^n][y^k] g(x)\sum{i=0}^{\infty} y^i f(x)^i\\
&=[x^n] g(x) [y^k]\sum{i=0}^{\infty} y^i f(x)^i\\
&=[x^n] g(x) f(x)^k.\end{align*}

The following simple result will be used in the sequel.
\begin{lemma} Let $G(x,y)=\frac{g(x)}{1-yf(x)}$ be the generating function of the Riordan array $(g(x),f(x))$.
Then $G(x,0)$ is the generating function of the first column of the array, and $G(x,1)$ is the generating function of the row sums of the array.
\end{lemma}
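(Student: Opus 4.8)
The plan is to evaluate the bivariate generating function $G(x,y)=\frac{g(x)}{1-yf(x)}$ at the two special values $y=0$ and $y=1$, and in each case match the resulting univariate series against the definition $a_{n,k}=[x^n]g(x)f(x)^k$ established just above the statement.

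First I would treat $G(x,0)$. Setting $y=0$ in the closed form gives $G(x,0)=\frac{g(x)}{1-0}=g(x)$. On the other hand, the first column of the array consists of the entries $a_{n,0}=[x^n]g(x)f(x)^0=[x^n]g(x)=g_n$, so the generating function of the first column is $\sum_{n\ge 0} a_{n,0}x^n=\sum_{n\ge 0} g_n x^n = g(x)$. Comparing the two shows $G(x,0)$ is exactly the first-column generating function.

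Next I would treat $G(x,1)$. Setting $y=1$ gives $G(x,1)=\frac{g(x)}{1-f(x)}$, and since $f(0)=f_0=0$ the geometric expansion $\frac{1}{1-f(x)}=\sum_{i\ge 0} f(x)^i$ is a well-defined formal power series. Hence $G(x,1)=g(x)\sum_{i\ge 0}f(x)^i=\sum_{i\ge 0} g(x)f(x)^i$. Extracting the coefficient of $x^n$ and interchanging the (finite, for each fixed $n$) sum with the coefficient functional yields $[x^n]G(x,1)=\sum_{i\ge 0}[x^n]g(x)f(x)^i=\sum_{i\ge 0} a_{n,i}$, which is precisely the $n$-th row sum of the array. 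Therefore $G(x,1)$ is the generating function of the row sums.

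There is no real obstacle here: the only point requiring a word of care is the legitimacy of the formal geometric series and of swapping $[x^n]$ with the summation over $i$, both of which are justified by $f_0=0$ (so that $f(x)^i$ has order at least $i$, making the sum locally finite in each coefficient). Everything else is direct substitution into the already-derived identity $a_{n,k}=[x^n]g(x)f(x)^k$.
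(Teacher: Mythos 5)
Your proof is correct and follows essentially the same route as the paper: direct substitution of $y=0$ and $y=1$ into $G(x,y)$, with the geometric expansion $\frac{1}{1-f(x)}=\sum_{i\ge 0}f(x)^i$ and coefficient extraction giving $[x^n]G(x,1)=\sum_i a_{n,i}$. Your added remark justifying the formal expansion via $f_0=0$ is a small point of rigor the paper leaves implicit, but the argument is otherwise identical.
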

\begin{proof} We immediately have $G(x,0)=g(x)$. We also have
$$G(x,1)=\frac{g(x)}{1-f(x)}=(g(x), f(x))\cdot \frac{1}{1-x}.$$ Alternatively we have
\begin{align*}[x^n]G(x,1)&=[x^n] \frac{g(x)}{1-f(x)}\\
&=[x^n]g(x)\sum_{i=0}^{\infty} f(x)^i\\
&=\sum_{i=0}^n[x^n] g(x)f(x)^i\\
&=\sum_{i=0}^n a_{n,i}.\end{align*}
\end{proof}
\begin{corollary} If $G(x,y)$ is the generating function of a Riordan array $(g(x), f(x))$, then
$$g(x)=G(x,1),\quad f(x)=1-\frac{G(x,0)}{G(x,1)}.$$
\end{corollary}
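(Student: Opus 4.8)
The plan is to read the corollary as the exact inverse of the preceding Lemma. The Lemma passes from the defining pair $(g,f)$ to the two slices $G(x,0)$ and $G(x,1)$ of the bivariate generating function; the corollary reverses this, reconstructing $g$ and $f$ from the single object $G(x,y)$. Consequently every quantity I need has already been produced inside the proof of the Lemma, and the argument will be a short deduction rather than a fresh computation.

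First I would record the two specializations supplied by the Lemma, namely $G(x,0)=g(x)$ and $G(x,1)=\frac{g(x)}{1-f(x)}$. The first of these already recovers $g$ from $G$, since setting $y=0$ annihilates every column of $G(x,y)$ except the $k=0$ column, whose generating function is precisely $g(x)$. At this point I would confront the first displayed identity as it is worded, $g(x)=G(x,1)$: because the Lemma has identified $G(x,1)$ as the row-sum series $g(x)/(1-f(x))$, which differs from $g(x)$ already in the coefficient of $x$ (by $g_0 f_1\neq 0$), the identity that actually holds is $g(x)=G(x,0)$. I therefore read the printed $G(x,1)$ in the first identity as a slip for $G(x,0)$, and it is this form that the Lemma establishes.

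For the second identity I would form the quotient of the two slices directly. Using the Lemma,
$$\frac{G(x,0)}{G(x,1)}=\frac{g(x)}{g(x)/(1-f(x))}=1-f(x),$$
so that $f(x)=1-\frac{G(x,0)}{G(x,1)}$, exactly as stated. Before writing this quotient I must check that the division is legitimate in the ring of formal power series: both $g(x)$ and $G(x,1)$ have nonzero constant term, since $g(0)=g_0\neq 0$ and $G(0,1)=g_0\neq 0$ by the standing hypothesis, so each is invertible and the cancellation is valid. Since both identities are one-line consequences of the Lemma, there is no substantial obstacle in the derivation itself; the only points requiring care are this invertibility bookkeeping and the reconciliation of the first printed identity with the Lemma, which forces the reading $g(x)=G(x,0)$.
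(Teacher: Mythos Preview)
Your argument is correct and matches the paper's own proof: both start from the Lemma's identifications $G(x,0)=g(x)$ and $G(x,1)=g(x)/(1-f(x))$ and solve the latter for $f(x)$. Your diagnosis of the first displayed identity as a slip for $g(x)=G(x,0)$ is confirmed by the paper itself, whose proof uses $g(x)=G(x,0)$ in the chain $\frac{g(x)}{1-f(x)}=\frac{G(x,0)}{1-f(x)}=G(x,1)$ and never attempts to establish $g(x)=G(x,1)$ (and indeed later proofs in the paper set $\mathfrak{g}(x)=G(x,0)$).
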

\begin{proof} We have $\frac{g(x)}{1-f(x)}=\frac{G(x,0)}{1-f(x)}=G(x,1)$. Solving for $f(x)$ gives
$$f(x)=1-\frac{G(x,0)}{G(x,1)}.$$
\end{proof}

We note that if $G(x,y)$ is the generating function of a number triangle, then $G(x,x)$ is the generating function of its diagonal sums.

\begin{example} Pascal's triangle $\left(\binom{n}{k}\right)_{0 \le n,k \le \infty}$, regarded as a lower triangular matrix, is given by the Riordan array $\left(\frac{1}{1-x}, \frac{x}{1-x}\right)$. Its (bivariate) generating function is thus given by
$$G(x,y)=\frac{\frac{1}{1-x}}{1-y\frac{x}{1-x}}=\frac{1}{1-x-xy}.$$
We have $G(x,0)=\frac{1}{1-x}, G(x,1)=\frac{1}{1-2x}$, and $G(x,x)=\frac{1}{1-x-x^2}$.
Thus the first column consists of $1$'s, the row sums are given by $2^n$, and the diagonal sums are the Fibonacci numbers $F_{n+1}$ \seqnum{A000045}. This matrix is also known as the binomial matrix \seqnum{A007318}.
\end{example}

Pascal's triangle is an example of a Bell array, which is a Riordan array of the form $(g(x), xg(x))$. Such arrays form a subgroup of the Riordan group.

We now ask ourselves the question: what can we say about a generating function given by a continued fraction, such as
$$\cfrac{1}{1-x-\cfrac{xy}{1-x-\cfrac{x}{1-x-\cfrac{x}{1-x- \cdots}}}}\quad?$$
In fact, this continued fraction is the generating function of the Riordan array
$$\left(\frac{1}{1-x}, \frac{1-x-\sqrt{1-6x+x^2}}{2(1-x)}\right).$$ This array begins
$$\left(
\begin{array}{ccccc}
 1 & 0 & 0 & 0 & 0 \\
 1 & 1 & 0 & 0 & 0 \\
 1 & 4 & 1 & 0 & 0 \\
 1 & 13 & 7 & 1 & 0 \\
 1 & 44 & 34 & 10 & 1 \\
\end{array}
\right).$$

The type of continued fraction in this example is called a \emph{Thron continued fraction}, or T-type continued faction. Thron continued fractions are associated with Schr\"oder paths. The above continued fraction represents Schr\"oder paths where the rise steps at level $0$ have a weight of $y$, which may be expressed by saying they have $y$ colors. For instance, we have

$$\left(
\begin{array}{ccccc}
 1 & 0 & 0 & 0 & 0 \\
 1 & 1 & 0 & 0 & 0 \\
 1 & 4 & 1 & 0 & 0 \\
 1 & 13 & 7 & 1 & 0 \\
 1 & 44 & 34 & 10 & 1 \\
\end{array}
\right)\left(\begin{array}{c}
1\\
3\\
9\\
27\\
81\\ \end{array}\right)=
\left(\begin{array}{c}
1\\
4\\
22\\
130\\
790\\ \end{array}\right).$$

This means that the sequence $1,4,22,130,790,\ldots$ \seqnum{A155862} counts Schr\"oder paths (of length $2n$) whose rise step at level $0$ has $3$ colors.

A Riordan array of the form $\left(\frac{1+\alpha x+ \beta x^2}{1+ax+bx^2}, \frac{x}{1+a x + bx^2}\right)$ is the coefficient array of the family of orthogonal polynomials $P_n(x)$ which satisfies the recurrence relation
$$P_n(x)=(x-a)P_{n-1}(x)-b P_{n-2}(x).$$ The production matrix of the inverse of this matrix begins
 $$\left(
\begin{array}{cccccc}
 a-\alpha & 1 & 0 & 0 & 0 & 0 \\
 b-\beta & a & 1 & 0 & 0 & 0 \\
 0 & b & a & 1 & 0 & 0 \\
 0 & 0 & b & a & 1 & 0 \\
 0 & 0 & 0 & b & a & 1 \\
 0 & 0 & 0 & 0 & b & a \\
\end{array}
\right).$$ Here, if $M$ is an invertible matrix, then the production matrix of $M$  is given by $M^{-1} \overline{M}$, where $\overline{M}$ is the matrix $M$ with its top row removed. This tri-diagonal matrix is associated with the Jacobi continued fraction
$$\cfrac{1}{1-(a-\alpha)x-\cfrac{(b-\beta)x^2}{1-ax-\cfrac{bx^2}{1-ax-\cdots}}},$$ which is the generating function of the first column of the moment matrix $\left(\frac{1+\alpha x+ \beta x^2}{1+ax+bx^2}, \frac{x}{1+a x + bx^2}\right)^{-1}$ \cite{OP}.

Where known, integer sequences appearing in this note will be referenced by their entry number $Annnnnn$ in the Neil Sloane's On-Line Encyclopedia of Integer Sequences \cite{SL1, SLII}. Numerous examples of Riordan arrays and of sequences whose generating functions are given by continued fractions may be found therein. 
In the next section, we review the topics of lattice paths and continued paths.

\section{Lattice paths - a review}
A \emph{Dyck path} (see Figure \ref{Dyck}) is a path in the first quadrant that begins at the origin $(0,0)$, ends at $(2n,0)$, and consist of steps $(1,1)$ (North-East), called \emph{rises} or \emph{up steps}, and $(1,-1)$ (South-East), called \emph{falls} or \emph{down steps}. We refer to $n$ as the \emph{semi-length} of the path. Dyck paths of semi-length $n$ are sometimes referred to as Dyck $n$-paths. A \emph{peak} of a Dyck path is the joint node formed by a rise step immediately followed by a fall step. The \emph{height} of a peak is the $y$-coordinate of this node.

\begin{figure}
\begin{center}
\begin{tikzpicture}[set style={{help lines}+=[dashed]}]
\draw (0,0)--(1,1)--(2,2)--(3,3)--(4,2)--(5,3)--(6,2)--(7,1)--(8,2)--(9,1)--(10,0);
\draw[style=help lines] (0,0) grid (10,3);
\end{tikzpicture}
\caption{A Dyck path}\label{Dyck}
\end{center}
\end{figure}
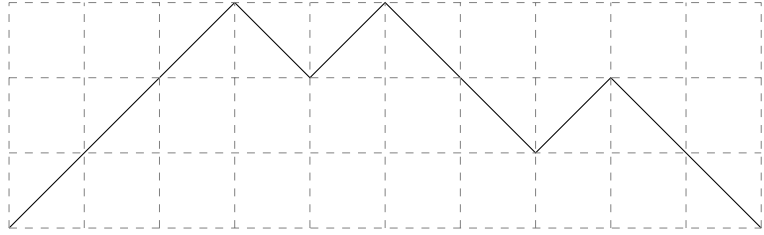

A \emph{Motzkin path} (see Figure \ref{Motzkin}) is a path in the first quadrant which begins at the origin $(0,0)$, ends at $(n,0)$, and consists of steps $(1,1)$ (North-East), called \emph{rises}, and (1,-1) (South-East), called \emph{falls}, and steps $(1,0)$ (East) called \emph{horizontals}. A partial Motzkin path that starts from $(0,0)$ and ends at the point $(n,k)$ (not necessarily on the $x$-axis) is called a \emph{left factor} of a Motzkin path.
\begin{figure}
\begin{center}
\begin{tikzpicture}[set style={{help lines}+=[dashed]}]
\draw plot coordinates {(0,0) (1,1) (2,0) (3,1) (4,1) (5,2) (6,2) (7,1) (8,0) (9,1) (10,0) };
\draw[style=help lines] (0,0) grid (10,3);
\end{tikzpicture}
\end{center}
\caption{A Motzkin path}\label{Motzkin}
\end{figure}
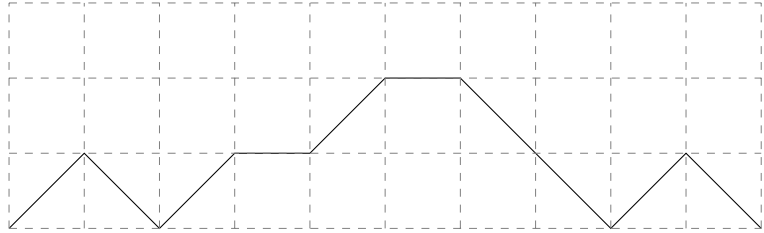

\begin{figure}
\begin{center}
\begin{tikzpicture}[set style={{help lines}+=[dashed]}]
\draw plot coordinates {(0,0) (1,1) (2,1) (3,2) (5,0) (6,0)  (7,1) (8,1) (9,0) };
\node at (0.2, 0.5) {$a_0$};
\node at (1.4, 1.2) {$c_1$};
\node at (2.2, 1.5) {$a_1$};
\node at (3.7, 1.5) {$b_2$};
\node at (4.7, 0.5) {$b_1$};
\node at (5.4, 0.2) {$c_0$};
\node at (6.2, 0.5) {$a_0$};
\node at (7.4, 1.2) {$c_1$};
\node at (8.7, 0.5) {$b_1$};
\draw[style=help lines] (0,0) grid (12,3);
\draw[style=help lines] (7,4) grid (9,6);
\draw (7,5) -- (8,4); 
\node at (5.8,5) {level $k$}; 
\node at (8.5, 4.2) {$b_k$};
\node at (8.5, 5) {$c_k$};
\node at (8.5, 5.7) {$a_k$};
\node at (7.8, 6.3) {valuation $\nu$};
\draw (7,5) -- (8,5); 
\draw (7,5) -- (8,6); 
\end{tikzpicture}
\end{center}
\caption{A weighted Motzkin path $\nu(\omega)=a_0^2a_1b_1^2b_2c_0c_1^2$}\label{Motzkin_ext}
\end{figure}
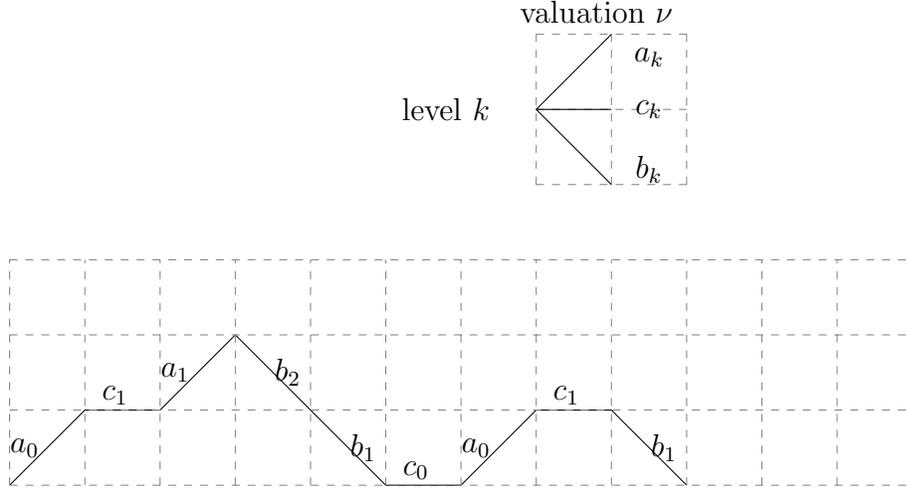

Finally a \emph{Schr\"oder path} (see Figure \ref{Schroeder}) is a path in the first quadrant which begins at the origin $(0,0)$, ends at $(2n,0)$, and consists of steps $(1,1)$ (North-East), called \emph{rises}, and $(1,-1)$ (South-East) called \emph{falls}, and steps $(2,0)$ (East) called \emph{horizontals} or \emph{level steps}. Note that a Dyck path is a Schr\"oder path with no horizontals.

\begin{example} The Riordan array
$$\left(\frac{1-2x}{1-x}, \frac{x(1-2x)}{1-x}\right)^{-1}=$$
$$\left(\frac{1+x-\sqrt{1-6x+x^2}}{4x}, \frac{1+x-\sqrt{1-6x+x^2}}{4}\right)$$
counts the number of Schr\"oder paths of length $2n$ that have $k$ peaks at height $1$. This member of the Bell subgroup of the Riordan group begins
$$\left(
\begin{array}{cccccc}
 1 & 0 & 0 & 0 & 0 & 0 \\
 1 & 1 & 0 & 0 & 0 & 0 \\
 3 & 2 & 1 & 0 & 0 & 0 \\
 11 & 7 & 3 & 1 & 0 & 0 \\
 45 & 28 & 12 & 4 & 1 & 0 \\
 197 & 121 & 52 & 18 & 5 & 1 \\
\end{array}
\right),$$ with a production matrix that begins
$$\left(
\begin{array}{cccccc}
 1 & 1 & 0 & 0 & 0 & 0 \\
 2 & 1 & 1 & 0 & 0 & 0 \\
 4 & 2 & 1 & 1 & 0 & 0 \\
 8 & 4 & 2 & 1 & 1 & 0 \\
 16 & 8 & 4 & 2 & 1 & 1 \\
 32 & 16 & 8 & 4 & 2 & 1 \\
\end{array}
\right).$$
\end{example}

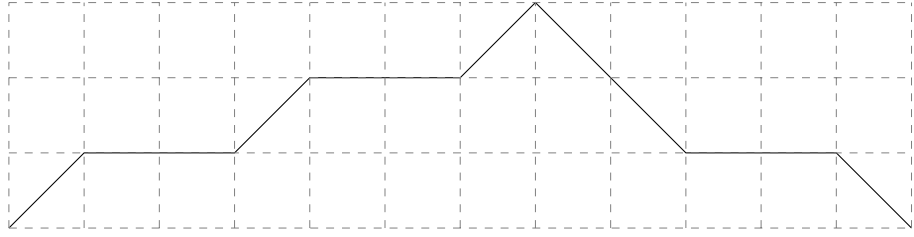
\begin{figure}
\begin{center}
\begin{tikzpicture}[set style={{help lines}+=[dashed]}]
\draw (0,0)--(1,1)--(3,1)--(4,2)--(6,2)--(7,3)--(9,1)--(11,1)--(12,0);
\draw[style=help lines] (0,0) grid (12,3);
\end{tikzpicture}
\caption{A Schr\"oder path}\label{Schroeder}
\end{center}
\end{figure}

Dyck paths of semi-length $n$ are counted by the Catalan numbers $C_n=\frac{1}{n+1}\binom{2n}{n}$ \seqnum{A000108}, since they both obey the same recurrence relation. The generating function of the Catalan numbers, $c(x)=\frac{1-\sqrt{1-4x}}{2x}$, can be expressed as a continued fraction as follows.
$$c(x)=\cfrac{1}{1-\cfrac{x}{1-\cfrac{x}{1-\cdots}}}.$$
We can see this by solving the equation
$$u=\frac{1}{1-x u},$$
equivalent to the continued fraction expression. We obtain
$$u=\frac{1-\sqrt{1-4x}}{2x} \quad \text{or}\quad u=\frac{1+\sqrt{1-4x}}{2x}.$$
The first form satisfies $u(0)=0$, so this is the solution we seek.

Continued fractions of the form
$$\cfrac{1}{1-\cfrac{a x}{1-\cfrac{bx}{1-\cfrac{c x}{1-\cdots}}}}$$ are known as \emph{Stieltjes continued fractions}.
Flajolet \cite{Flajolet} showed that
$$\cfrac{1}{1-\cfrac{\alpha_1 x}{1-\cfrac{\alpha_2 x}{1-\cdots}}}=\sum_{n=0}^{\infty}S_n(\alpha_1,\ldots,\alpha_n)x^n,$$
where $S_n(\alpha_1,\ldots,\alpha_n)$ is the generating function for Dyck paths of semi-length $n$ in which each fall starting at height $i$ has weight $\alpha_i$ (rises have weight $1$).

Motzkin paths of length $n$ are counted by the Motzkin numbers $M_n$, where
$$M_n=\sum_{k=0}^{\lfloor \frac{n}{2} \rfloor} \binom{n}{2k} C_k.$$
The generating function $m(x)$ of the Motzkin numbers is given by
$$m(x)=\frac{1-x-\sqrt{1-2x-3x^2}}{2x^2}.$$
We have
$$m(x)=\cfrac{1}{1-x-\cfrac{x^2}{1-x-\cfrac{x^2}{1-x-\cdots}}}.$$
Continued fractions of the form
$$\cfrac{1}{1-\alpha_0 x-\cfrac{\beta_1 x^2}{1-\alpha _1 x-\cfrac{\beta_2 x^2}{1-\alpha_2 x-\cdots}}}$$
are called \emph{Jacobi continued fractions}.
Flajolet \cite{Flajolet} has shown that
$$\cfrac{1}{1-\alpha_0 x-\cfrac{\beta_1 x^2}{1-\alpha _1 x-\cfrac{\beta_2 x^2}{1-\alpha_2 x-\cdots}}}=\sum_{n=0}^{\infty} J_n(\mathbf{\alpha}, \mathbf{\beta})x^n,$$
where $J_n(\mathbf{\alpha}, \mathbf{\beta})$ is the generating function for Motzkin paths of length $n$ in which each fall at height $i$ has weight $\alpha_i$,  each horizontal step at height $i$ has weight $\beta_i$, and rises have weight $1$. More generally, Flajolet showed the following. If we denote by $|\omega|$ the length of a Motzkin path $\omega$, and by $\nu(\omega)$ the weight of the Motzkin path $\omega$ (see Figure \ref{Motzkin_ext}), then we have ``Flajolet's fundamental lemma of Jacobi continued fractions'' \cite{Viennot}
$$\sum_{\omega:\text{Motzkin paths}}\nu(\omega)x^{|\omega|}=\cfrac{1}{1-c_0 x- \cfrac{a_0 b_1x^2}{1-c_1x-\cfrac{a_1b_2x^2}{1-c_2x-\cfrac{a_2b_3x^2}{1-c_3x-\cdots}}}}.$$ 
Thus the coefficient of $x^2$ is a product of weights: in this note, when we talk of colored rises, for instance, we attribute a weight of $1$ to the corresponding fall, and vice versa.

Schr\"oder paths of length $2n$ are counted by the large Schr\"oder numbers \seqnum{A006318}
$$S_n=\sum_{k=0}^n \binom{n+k}{2k} C_k.$$
The sequence $S_n$ has its generating function $s(x)$ given by
$$S(x)=\frac{1-x-\sqrt{1-6x+x^2}}{2x}.$$
We can express this as the following continued fraction.
$$S(x)=\cfrac{1}{1-x-\cfrac{x}{1-x-\cfrac{x}{1-x-\cdots}}}.$$
This can be seen by solving the equation
$$u=\frac{1}{1-x-xu}$$ and taking the solution with $u(0)=0$.

Continued fractions of the form
$$\cfrac{1}{1-ax - \cfrac{bx}{1- cx - \cfrac{dx}{1-e x- \cdots}}}$$ are called Thron or $T$-continued fractions. We then have the following result \cite{Josuat, Oste, Sokal}.
$$\cfrac{1}{1-\alpha_0 x-\cfrac{\beta_1 x}{1-\alpha _1 x-\cfrac{\beta_2 x}{1-\alpha_2 x-\cdots}}}=\sum_{n=0}^{\infty} T_n(\mathbf{\alpha}, \mathbf{\beta})x^n,$$
where $T_n(\mathbf{\alpha}, \mathbf{\beta})$ is the generating function for Schr\"oder paths of length $2n$ in which each rise has weight $1$, each horizontal step at height $i$ has weight $\alpha_i$, and each fall from height $i$ has weight $\beta_i$.

\section{Results}

We are now in a position to describe certain families of Riordan arrays which are defined by  Jacobi and Thron continued fractions.

\begin{proposition} The generating function
$$G(x,y)=\cfrac{1}{1-(y+a)x-\cfrac{(y+b)x^2}{1-c x -\cfrac{dx^2}{1-cx - \cdots}}}$$ is the generating function of the Riordan array $(g(x), f(x))$ where
$$g(x)=\frac{2d}{b \sqrt{1-2cx+(c^2-4d)x^2}+(bc-2ad)-b+2d},$$ and
$$f(x)=\frac{(1+(a-b)x)\sqrt{1-2cx+(c^2-4d)x^2}+(a(c-2d)+b(c-2))x^2-(a+b+c-2d)x+1}{2((a^2d-abc+b^2)x^2+a(b-2d)x+bc-b+d)}.$$ \end{proposition}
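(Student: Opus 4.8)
The plan is to collapse the periodic tail of the continued fraction into a single quadratic surd, and then to read off $g$ and $f$ by separating out the dependence on $y$.

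First I would introduce the ``truncated'' continued fraction
$$H(x)=\cfrac{1}{1-cx-\cfrac{dx^2}{1-cx-\cfrac{dx^2}{1-cx-\cdots}}},$$
obtained by deleting the first two stages of $G(x,y)$. By self-similarity $H$ satisfies $H=\dfrac{1}{1-cx-dx^2H}$, that is $dx^2H^2-(1-cx)H+1=0$, and selecting the branch with $H(0)=1$ gives
$$H(x)=\frac{1-cx-\sqrt{1-2cx+(c^2-4d)x^2}}{2dx^2}.$$
By the definition of the Jacobi continued fraction (with $\alpha_0=y+a$, $\beta_1=y+b$, and $\alpha_i=c$, $\beta_{i+1}=d$ for $i\ge 1$) we then have $G(x,y)=\dfrac{1}{1-(y+a)x-(y+b)x^2H(x)}$.

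Next I would isolate $y$. Writing the denominator as $\bigl(1-ax-bx^2H\bigr)-y\bigl(x+x^2H\bigr)$ yields
$$G(x,y)=\frac{\dfrac{1}{1-ax-bx^2H}}{1-y\,\dfrac{x+x^2H}{1-ax-bx^2H}}=\frac{g(x)}{1-yf(x)},$$
where $g(x)=\dfrac{1}{1-ax-bx^2H(x)}$ and $f(x)=\dfrac{x+x^2H(x)}{1-ax-bx^2H(x)}=\dfrac{x\bigl(1+xH(x)\bigr)}{1-ax-bx^2H(x)}$. Since $x^2H(x)=x^2+O(x^3)$ and $H(0)=1$, we get $g(0)=1\neq 0$, $f(0)=0$ and $[x^1]\,f(x)=1\neq 0$, so $(g,f)$ is a bona fide Riordan array; and by the correspondence between $(g,f)$ and the bivariate series $\dfrac{g(x)}{1-yf(x)}$ recalled in the Introduction, its generating function is precisely $G(x,y)$. (One may note that the corollary in the Introduction provides the same $g=G(x,1)$ and $f=1-G(x,0)/G(x,1)$ as a consistency check.)

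It remains to put $g$ and $f$ into closed form. Substituting $x^2H(x)=\dfrac{1-cx-\sqrt{S}}{2d}$, where $S=1-2cx+(c^2-4d)x^2$, directly into $g$ and clearing the denominator gives the stated expression for $g(x)$. For $f(x)$, the same substitution produces a ratio in which $\sqrt{S}$ occurs in both the numerator and the denominator; multiplying top and bottom by the conjugate of the denominator removes $\sqrt{S}$ from below, and after expanding, collecting powers of $x$, and cancelling the common factor $2d$, one arrives at the displayed formula for $f(x)$. The main obstacle is precisely this last step: it is a long but entirely mechanical manipulation of quadratic surds, where the only genuine care is needed in keeping track of signs and in fixing the branch of $\sqrt{S}$ so that $f(0)=0$ holds.
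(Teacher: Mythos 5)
Your argument is correct and follows essentially the same route as the paper: collapse the periodic tail into the surd $H$ satisfying $dx^2H^2-(1-cx)H+1=0$, then exhibit $G(x,y)$ in the form $g(x)/(1-yf(x))$. Your way of extracting $g$ and $f$ --- observing that the denominator $1-(y+a)x-(y+b)x^2H$ is \emph{linear} in $y$ and dividing through by $1-ax-bx^2H$ --- is a little cleaner than the paper's, which instead sets $\mathfrak{g}=G(x,0)$, $\mathfrak{f}=1-G(x,0)/G(x,1)$ and then checks by ``evaluation'' that $G=\mathfrak{g}/(1-y\mathfrak{f})$; your decomposition makes that verification automatic and explains structurally why confining the $y$-dependence to level $0$ yields a Riordan array. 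One caveat on your final paragraph: the mechanical elimination of $\sqrt{S}$ will \emph{not} land exactly on the displayed formulas, because the statement as printed contains slips --- for instance the term $(bc-2ad)$ in the denominator of $g$ should be $(bc-2ad)x$, and the denominator of $f$ should read $2\bigl((a^2d-abc+b^2)x^2+(ab+bc-2ad)x+d-b\bigr)$ --- so your computation will in fact produce the corrected expressions, as one can confirm against the paper's worked example with $a=2$, $b=3$, $c=1$, $d=4$.
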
.
\begin{proof}
We let $u=u(x)$ be defined by
$$u=\frac{1}{1- cx - dx^2 u}. $$
Then
$$G(x,y)=\frac{1}{1-(y+a)x-(y+b)x^2 u(x)}.$$
Now let
$\mathfrak{g}(x)=G(x,0)$ and $\mathfrak{f}(x)=1-\frac{G(x,0)}{G(x,1)}$.
We find that in fact we have
$$g(x)=\mathfrak{g}(x),\quad f(x)=\mathfrak{f}(x),$$ and evaluation then shows that
$$G(x,y)=\frac{g(x)}{1-y f(x)}.$$

\end{proof}
We now turn to the inverse of the Riordan array of the above proposition. Note that the Jacobi continued fraction of the proposition can be written as
$$\mathcal{J}(y+a,c,c, \ldots; y+b,d,d,\ldots).$$
\begin{proposition} Let $(g(x), f(x))$ be the Riordan array with generating function given by
$$G(x,y)=\mathcal{J}(y+a,c,c,c, \ldots; y+b,d,d,d,\ldots).$$
Then the generating function of the inverse Riordan array $(g(x), f(x))^{-1}$ is given by the Jacobi continued fraction $$\mathcal{J}(y-a,c-a-2,c-a-2,\ldots; a-b-y,1+a-b-c+d, 1+a-b-c+d,\ldots).$$
\end{proposition}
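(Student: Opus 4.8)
The plan is to work directly with the bivariate generating function $G(x,y)$ and the characterization of Riordan arrays from the Corollary. Write $G(x,y) = \mathcal{J}(y+a,c,c,\ldots;y+b,d,d,\ldots)$, so that by the Corollary the array $(g(x),f(x))$ satisfies $g(x) = G(x,1)$ and $f(x) = 1 - G(x,0)/G(x,1)$. By the inversion formula for Riordan arrays, the inverse array $(g(x),f(x))^{-1} = (1/g(\bar f(x)), \bar f(x))$ has bivariate generating function
$$H(x,y) = \frac{1/g(\bar f(x))}{1 - y\,\bar f(x)}.$$
The goal is to show $H(x,y)$ equals the claimed Jacobi continued fraction $\mathcal{J}(y-a,c-a-2,\ldots;a-b-y,1+a-b-c+d,\ldots)$. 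I would not attempt to manipulate the nested radicals directly; instead I would use the production-matrix / tridiagonal-matrix bridge sketched in the introduction.

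The key steps, in order, are as follows. First, I would record that a Riordan array $(g(x),f(x))$ whose inverse has generating function equal to a Jacobi continued fraction $\mathcal{J}(\alpha_0,\alpha_1,\ldots;\beta_1,\beta_2,\ldots)$ is precisely the one whose \emph{production matrix} is the tridiagonal matrix with $\alpha_i$ on the diagonal, $\beta_i$ on the subdiagonal, and $1$ on the superdiagonal — this is the content of the displayed production matrix in the introduction together with Flajolet's fundamental lemma, and for the $y$-dependent case one simply carries $y$ along as a parameter in $\alpha_0$ and $\beta_1$. So the statement to be proved is equivalent to: the production matrix of $(g(x),f(x))^{-1}$ (in the sense $M \mapsto M^{-1}\overline{M}$) has the tridiagonal form with diagonal $(y-a, c-a-2, c-a-2,\ldots)$, subdiagonal $(a-b-y, 1+a-b-c+d, 1+a-b-c+d,\ldots)$, and superdiagonal all $1$'s. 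Second, since $(g,f)^{-1}$ has the \emph{same} production matrix up to this duality as the original array's data, I would instead compute the production matrix of the array $(g(x),f(x))$ itself from the continued-fraction data $\mathcal{J}(y+a,c,\ldots;y+b,d,\ldots)$, then pass to the inverse by the standard fact that if $P$ is the production matrix of $M$ then $M^{-1}$ has generating-function data read off from $P$ via the $A$- and $Z$-sequences. Concretely: from the Jacobi continued fraction for $G(x,y)$, the first column of $(g,f)^{-1}$ has that generating function, so $(g,f)^{-1}$ has the tridiagonal production matrix $T$ with data $(y+a,c,c,\ldots)$, $(y+b,d,d,\ldots)$; therefore $(g,f)$ has production matrix $T$ in the reversed sense, and I must transform $T$ under the operation $M \mapsto M^{-1}$ at the level of production matrices.

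The cleanest route, and the one I would actually carry out, is this third step: use the known formula relating the production matrix of $(g(x),f(x))$ to that of $(g(x),f(x))^{-1}$. If $(g,f)$ has production matrix with $A$-sequence generating function $A(x)$ and $Z$-sequence generating function $Z(x)$ — meaning $f = x\,A(f)$ appropriately and the first column is governed by $Z$ — then its inverse has production matrix governed by $\tilde A(x) = 1/(x \cdot (\text{something in } \bar f))$ and a correspondingly transformed $\tilde Z$. For a \emph{tridiagonal} production matrix the $A$-sequence generating function is the continued fraction itself, so I would: (i) identify $A(x)$ and $Z(x)$ for the original array from $\mathcal{J}(y+a,c,\ldots;y+b,d,\ldots)$; (ii) apply the transformation rules to get $\tilde A(x), \tilde Z(x)$; (iii) verify that $\tilde A(x)$ is again a Jacobi-type continued fraction and read off its parameters, checking they equal $(y-a, c-a-2, c-a-2,\ldots)$ and $(a-b-y, 1+a-b-c+d,\ldots)$. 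The arithmetic in (ii)–(iii) — in particular showing that the transformed $A$-sequence generating function "stabilizes" to a periodic continued fraction after the first level, with tail parameters $c-a-2$ and $1+a-b-c+d$ — is the main obstacle; it amounts to a quadratic-equation computation with the square root $\sqrt{1-2cx+(c^2-4d)x^2}$, and one must be careful that the $y$-dependence appears only in $\alpha_0$ and $\beta_1$ of the transformed fraction and nowhere in the tail. I expect the constants $-2$ and $1+a-b-c+d$ to emerge from the difference between the level-$0$ weights $(a,b)$ and the tail weights $(c,d)$ precisely because, under Riordan inversion, a level step of weight $w$ and a peak interact to shift both diagonal and subdiagonal entries by fixed amounts; verifying this shift is constant for all levels $\ge 1$ is exactly where the "periodicity" of the original continued fraction (all tail weights equal to $c,d$) is used.

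Alternatively, if the production-matrix route proves unwieldy, I would fall back on a direct verification: compute $\bar f(x)$ from the explicit $f(x)$ in the previous Proposition, form $H(x,y) = (1/g(\bar f(x)))/(1-y\bar f(x))$, and check that $H$ satisfies the functional equation characterizing the target continued fraction, namely $H = 1/\bigl(1-(y-a)x - (a-b-y)x^2 v\bigr)$ where $v$ solves $v = 1/(1-(c-a-2)x - (1+a-b-c+d)x^2 v)$. This reduces everything to a single algebraic identity in $x,y$ (and the radical), which a computer algebra system can confirm, but the production-matrix argument is preferable because it explains \emph{why} the tail stabilizes.
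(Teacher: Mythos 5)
Your fallback route --- compute $\bar{f}$ explicitly from the previous proposition, form $H(x,y)=\frac{1/g(\bar{f}(x))}{1-y\,\bar{f}(x)}$, and verify the algebraic identity coming from the quadratic satisfied by the tail of the target continued fraction --- is in substance exactly the paper's proof: the paper sets $\bar{G}$ equal to the claimed Jacobi fraction, forms $\bar{\mathfrak{g}}(x)=\bar{G}(x,0)$ and $\bar{\mathfrak{f}}(x)=1-\bar{G}(x,0)/\bar{G}(x,1)$, and then checks $\bar{\mathfrak{g}}=1/g(\bar{f})$, $\bar{\mathfrak{f}}=\bar{f}$, and $\bar{G}=\bar{\mathfrak{g}}/(1-y\bar{\mathfrak{f}})$. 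Had you carried out that computation, you would have a complete proof.

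The production-matrix route that you designate as your main argument, however, has concrete gaps as written. First, $G(x,y)$ is the generating function of $(g,f)$ itself, so $G(x,0)$ is the first column of $(g,f)$, not of $(g,f)^{-1}$; your chain ``the first column of $(g,f)^{-1}$ has that generating function, so $(g,f)^{-1}$ has the tridiagonal production matrix $T$ \ldots\ therefore $(g,f)$ has production matrix $T$ in the reversed sense'' starts from a misattribution, and ``production matrix in the reversed sense'' is not a defined operation. Second, the equivalence asserted in your first step does not capture the $y$-dependence: a tridiagonal production matrix determines the numerical J-fraction of the first column, but the coefficient of $y$ in $\beta_1$ of the \emph{bivariate} J-fraction is extra data not visible in that matrix. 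Compare the paper's proposition on $\left(\frac{1+cx}{1+ax+bx^2},\frac{x}{1+ax+bx^2}\right)^{-1}$, where $\beta_1=cy+b$ rather than $y+b$; and in the present statement the $y$-coefficient of $\beta_1$ flips from $+1$ to $-1$ under inversion. So ``carrying $y$ along as a parameter in $\alpha_0$ and $\beta_1$'' requires an argument you have not supplied. Third, there is no simple general rule transforming the production matrix of $M$ into that of $M^{-1}$: the $A$-sequences of $(g,f)$ and of its inverse are related through $f$ and $\bar{f}$, tridiagonality is not preserved in general, and your steps (ii)--(iii) are exactly where the substance of the proposition lives rather than a routine transfer. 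None of this makes the proposition doubtful --- the direct verification succeeds --- but the conceptual route would need to be rebuilt before it constitutes a proof.
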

\begin{proof} The steps of the proof are essentially the same as in the previous proposition. Thus let
$$\bar{G}(x,y)=\mathcal{J}(y-a,c-a-2,\ldots; a-b-y,1+a-b-c+d, \ldots),$$ and form
$$\bar{\mathfrak{g}}(x)=\bar{G}(x,0),\quad \bar{\mathfrak{f}}(x)=1-\frac{\bar{G}(x,0)}{\bar{G}(x,1)}.$$
Then we find that
$$\bar{\mathfrak{g}}(x)=\frac{1}{g(\bar{f}(x))}, \quad \bar{\mathfrak{f}}(x)=\bar{f}(x),$$  and that
$$\bar{G}(x,y)=\frac{\frac{1}{g(\bar{f}(x))}}{1-y \bar{f}(x)}.$$
\end{proof}
\begin{example} We consider the generating function
$$G(x,y)=\cfrac{1}{1-(y+2)x-\cfrac{(y+3)x}{1-x-\cfrac{4x^2}{1-x-\cdots}}}.$$
Then $G(x,y)$ is the generating function of the Riordan array
$$\left(\frac{5-13x-3\sqrt{1-2x-15x^2}}{2(1-7x+19x^2)}, \frac{1+2x-17x^2-(1+x)\sqrt{1-2x-15x^2}}{2(1-7x+19x^2)}\right).$$
This Riordan array begins
$$\left(
\begin{array}{ccccc}
 1 & 0 & 0 & 0 & 0 \\
 2 & 1 & 0 & 0 & 0 \\
 7 & 5 & 1 & 0 & 0 \\
 23 & 23 & 8 & 1 & 0 \\
 88 & 101 & 48 & 11 & 1 \\
\end{array}
\right).$$
The inverse array is then given by
$$\left(\frac{6}{7+15x-\sqrt{1+6x-3x^2}}, \frac{(1+3x)\sqrt{1+6x-3x^2}+7x^2+2x-1}{2(4+17x+19x^2)}\right),$$ whose generating function $\bar{G}(x,y)$ is given by the Jacobi continued fraction
$$\bar{G}(x,y)=\cfrac{1}{1-(y-2)x+\cfrac{(y+1)x^2}{1+3x-\cfrac{3x^2}{1+3x-\cdots}}}.$$
We have, for instance,
$$\left(
\begin{array}{ccccc}
 1 & 0 & 0 & 0 & 0 \\
 2 & 1 & 0 & 0 & 0 \\
 7 & 5 & 1 & 0 & 0 \\
 23 & 23 & 8 & 1 & 0 \\
 88 & 101 & 48 & 11 & 1 \\
\end{array}
\right)\left(\begin{array}{c}
1\\
2\\
4\\
8\\
16\\ \end{array}\right)=
\left(\begin{array}{c}
1\\
4\\
21\\
109\\
586\\ \end{array}\right).$$
We can interpret this to mean that the sequence $1,4,21,109,\ldots$ counts Motzkin paths whose horizontal steps at level $0$ have $2+2=4$ colors, whose rise steps at level $0$ have $2+3=5$ colors, and whose horizontal steps thereafter have $1$ color, and whose rise steps after level $0$ have $4$ colors.
\end{example}

Using the same methods of proof as above, we can obtain the following result.
\begin{proposition} The generating function of the Riordan array
$$\left(\frac{1+cx}{1+ax+bx^2}, \frac{x}{1+ax+bx^2}\right)^{-1}$$ is given by the Jacobi continued fraction
$$\cfrac{1}{1-(y+a-c)x-\cfrac{(cy+b)x^2}{1-ax-\cfrac{bx^2}{1-ax-\cdots}}}.$$
\end{proposition}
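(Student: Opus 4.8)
The plan is to argue exactly as in the two preceding propositions, passing from the continued fraction back to the Riordan pair. Write $(g(x),f(x))=\left(\frac{1+cx}{1+ax+bx^2},\frac{x}{1+ax+bx^2}\right)$, so that $(g,f)^{-1}=\left(\frac{1}{g(\bar{f}(x))},\bar{f}(x)\right)$ has bivariate generating function $\dfrac{1/g(\bar{f}(x))}{1-y\,\bar{f}(x)}$. It therefore suffices to show that the Jacobi continued fraction in the statement equals this rational function of $y$.

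First I would let $u=u(x)$ denote the tail of the continued fraction, i.e.\ the power series with $u(0)=1$ satisfying $u=\dfrac{1}{1-ax-bx^2u}$; equivalently $u-axu-bx^2u^2=1$, which I will use in the rearranged form $1+axu+bx^2u^2=u$. With this notation the continued fraction of the statement is $\dfrac{1}{1-(y+a-c)x-(cy+b)x^2u}$, and separating the terms containing $y$ in the denominator rewrites it as $\dfrac{\bar{\mathfrak{g}}(x)}{1-y\,\bar{\mathfrak{f}}(x)}$ with
$$\bar{\mathfrak{g}}(x)=\frac{1}{1-(a-c)x-bx^2u},\qquad \bar{\mathfrak{f}}(x)=\frac{x(1+cxu)}{1-(a-c)x-bx^2u}.$$
This already displays the continued fraction as a Riordan generating function; what remains is to identify $\bar{\mathfrak{g}}$ and $\bar{\mathfrak{f}}$ with $1/g(\bar{f})$ and $\bar{f}$.

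For the second component: substituting $\bar{f}(x)=x\,u(x)$ into $f(\bar{f}(x))=x$ reduces, after clearing a factor of $x$, to precisely the defining quadratic $u-axu-bx^2u^2=1$, and the branch condition $\bar{f}(0)=0$ matches $u(0)=1$; hence $\bar{f}(x)=x\,u(x)$. Then $\bar{\mathfrak{f}}(x)=\dfrac{x(1+cxu)}{1-(a-c)x-bx^2u}=xu$, since $u\bigl(1-(a-c)x-bx^2u\bigr)=1+cxu$ is just a rewriting of $u(1-ax-bx^2u)=1$. For the first component: using $1+axu+bx^2u^2=u$ gives $g(\bar{f}(x))=\dfrac{1+c\,xu}{1+a\,xu+b\,x^2u^2}=\dfrac{1+cxu}{u}$, while the same relation gives $\bar{\mathfrak{g}}(x)=\dfrac{1}{1-(a-c)x-bx^2u}=\dfrac{u}{1+cxu}$; therefore $\bar{\mathfrak{g}}(x)=1/g(\bar{f}(x))$. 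Combining these identifications, the continued fraction equals $\dfrac{1/g(\bar{f}(x))}{1-y\,\bar{f}(x)}$, which is the generating function of $(g,f)^{-1}$, as claimed.

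All steps are elementary manipulations with the single quadratic relation for $u$, so I do not expect a genuine obstacle; the care needed is in the branch choice ($u(0)=1$, corresponding to $\bar{f}(0)=0$) and in the bookkeeping of the reduction $1+axu+bx^2u^2=u$, which is exactly where the special ``$1+cx$ over a quadratic'' shape of $g$ meshes with the ``$(cy+b)x^2$'' numerator appearing in the continued fraction.
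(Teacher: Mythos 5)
Your proof is correct and follows essentially the route the paper intends (the paper only says ``using the same methods of proof as above''): isolate the tail $u$ of the continued fraction, write the fraction as $\bar{\mathfrak{g}}(x)/(1-y\,\bar{\mathfrak{f}}(x))$ --- your splitting off of the $y$-terms reproduces exactly the paper's recipe $\bar{\mathfrak{g}}=G(x,0)$, $\bar{\mathfrak{f}}=1-G(x,0)/G(x,1)$ --- and identify these with $1/g(\bar{f})$ and $\bar{f}$. Your verification via the single relation $u=1+axu+bx^2u^2$ (in particular $\bar{f}=xu$) is a clean and complete way to carry out the check the paper leaves implicit, and all the individual identities you use are correct.
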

We can in fact strengthen our first result as follows.
\begin{proposition} Let
$$G(x,y)=\cfrac{1}{1-(a+by)x-\cfrac{(c+dy)x^2}{1-ux-\frac{vx^2}{1-ux-\cdots}}}.$$
Then $G(x,y)$ is the generating function of the Riordan array $(g(x), f(x))$ where
$$g(x)=\frac{2v}{2v-c+(cu-2av)x+c \sqrt{1-2ux+(u^2-4v)x^2}},$$ and
$$f(x)=\frac{d-(du-2bv)x-d \sqrt{1-2ux+(u^2-4v)x^2}}{2v-c+(cu-2av)x+c \sqrt{1-2ux+(u^2-4v)x^2}}.$$
\end{proposition}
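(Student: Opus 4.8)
\noindent The plan is to follow the template used for the earlier propositions of this section: isolate the periodic ``tail'' of the continued fraction, solve the resulting quadratic, rewrite $G(x,y)$ in the standard Riordan shape $g(x)/(1-yf(x))$, and read off $g$ and $f$. First I would set
$$w=w(x)=\cfrac{1}{1-ux-\cfrac{vx^2}{1-ux-\cdots}},$$
so that $w$ is the fixed point of $w=\dfrac{1}{1-ux-vx^2w}$, i.e. $vx^2w^2-(1-ux)w+1=0$. Solving this quadratic and picking the branch with $w(0)=1$ gives
$$w(x)=\frac{1-ux-\sqrt{1-2ux+(u^2-4v)x^2}}{2vx^2};$$
the choice of sign is forced by the expansion $\sqrt{1-2ux+(u^2-4v)x^2}=1-ux-2vx^2+\cdots$, which shows the numerator vanishes to order $x^2$ only with the minus sign. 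With this substitution the whole continued fraction collapses to $G(x,y)=\dfrac{1}{1-(a+by)x-(c+dy)x^2w(x)}$.

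Next I would separate the $y$-dependence in the denominator,
$$G(x,y)=\frac{1}{\bigl(1-ax-cx^2w\bigr)-y\bigl(bx+dx^2w\bigr)}=\frac{\dfrac{1}{1-ax-cx^2w}}{1-y\,\dfrac{bx+dx^2w}{1-ax-cx^2w}},$$
which is exactly of the form $g(x)/(1-yf(x))$ with
$$g(x)=\frac{1}{1-ax-cx^2w(x)},\qquad f(x)=\frac{bx+dx^2w(x)}{1-ax-cx^2w(x)}.$$
Since $1-ax-cx^2w(x)$ has constant term $1$, both $g$ and $f$ are genuine power series; moreover $g_0=1\neq 0$, $f_0=0$ and $f_1=b$, so (for $b\neq 0$) $(g,f)$ is a bona fide Riordan array, and by the Corollary of the Introduction one automatically has $g(x)=G(x,0)$ and $f(x)=1-G(x,0)/G(x,1)$.

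It then remains to substitute the closed form of $w(x)$ and simplify. Writing $x^2w(x)=\bigl(1-ux-\sqrt{1-2ux+(u^2-4v)x^2}\bigr)/(2v)$ and multiplying numerator and denominator through by $2v$, the quantity $1-ax-cx^2w$ becomes $\bigl((2v-c)+(cu-2av)x+c\sqrt{1-2ux+(u^2-4v)x^2}\bigr)/(2v)$ and $bx+dx^2w$ becomes $\bigl(d-(du-2bv)x-d\sqrt{1-2ux+(u^2-4v)x^2}\bigr)/(2v)$; the common factor $2v$ cancels in $f$, and one lands precisely on the asserted formulas for $g(x)$ and $f(x)$.

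There is no genuine obstacle here; the only thing demanding care is the bookkeeping in this final substitution — in particular selecting the branch of the square root so that $w(0)=1$ (equivalently $g(0)=1$, $f(0)=0$), and resisting the urge to rationalize the radical away, so as to collect terms into the un-rationalized denominator appearing in the statement. As a consistency check one can specialize $b=d=1$, which should reproduce the first Proposition of this section after relabelling, and one can verify the low-order coefficients of $G(x,y)$ directly against $g(x)\sum_k y^kf(x)^k$.
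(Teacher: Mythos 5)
Your proposal is correct and follows essentially the route the paper intends: the paper gives no explicit proof of this proposition, indicating only that it follows by the same method as the first proposition of the section (solve the quadratic for the periodic tail $w$, substitute, and identify $g(x)=G(x,0)$ and $f(x)=1-G(x,0)/G(x,1)$ before verifying $G=g/(1-yf)$). Your direct separation of the denominator into $\bigl(1-ax-cx^2w\bigr)-y\bigl(bx+dx^2w\bigr)$ is a slightly cleaner way to exhibit the form $g/(1-yf)$ than the paper's compute-then-verify template, and your observation that $b\neq 0$ is needed for $(g,f)$ to be a genuine Riordan array is a point the paper leaves implicit.
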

We now turn to Thron continued fractions. We have the following result.
\begin{proposition}
We let
$$G(x,y)=\cfrac{1}{1-(ay+b)x-\cfrac{(cy+d)x}{1-ux-\cfrac{vx}{1-ux-\cdots}}}.$$
Then $G(x,y)$ is the generating function of the Riordan array $(g(x),f(x))$ where
$$g(x)=\frac{2v}{d(2v-d+(du-2bv)x+\sqrt{1-2(u+2v)x+u^2 x^2}}$$ and
$$f(x)=\frac{c-(cu-2av)x-c \sqrt{1-2(u+2v)x+u^2x^2}}{2v-d+(du-2bv)x+d\sqrt{1-2(u+2v)x+u^2x^2}}.$$
\end{proposition}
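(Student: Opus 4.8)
The plan is to follow the same three‑step recipe used in the proof of Proposition~5. First I introduce the auxiliary power series $w=w(x)$ associated with the tail of the continued fraction, namely
$$w=\cfrac{1}{1-ux-\cfrac{vx}{1-ux-\cdots}},$$
so that $w$ satisfies the functional equation $w=\dfrac{1}{1-ux-vxw}$, equivalently $vxw^2-(1-ux)w+1=0$. Solving this quadratic and selecting the branch with $w(0)=1$ gives
$$w(x)=\frac{1-ux-\sqrt{1-2(u+2v)x+u^2x^2}}{2vx},\qquad\text{hence}\qquad xw(x)=\frac{1-ux-R}{2v},$$
where I write $R=R(x)=\sqrt{1-2(u+2v)x+u^2x^2}$ for brevity. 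With this notation the given continued fraction collapses to
$$G(x,y)=\frac{1}{1-(ay+b)x-(cy+d)xw(x)}.$$

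Next, observe that $G(x,y)$ is manifestly a linear–fractional function of $y$: collecting the coefficient of $y$ we get $G(x,y)=\bigl[(1-bx-dxw)-y(ax+cxw)\bigr]^{-1}$, so
$$G(x,y)=\frac{\dfrac{1}{1-bx-dxw}}{\,1-y\,\dfrac{(a+cw)x}{1-bx-dxw}\,}.$$
This already exhibits $G(x,y)$ in the Riordan form $\dfrac{g(x)}{1-yf(x)}$, and by the Corollary in the Introduction the only candidates are
$$g(x)=G(x,0)=\frac{1}{1-bx-dxw},\qquad f(x)=1-\frac{G(x,0)}{G(x,1)}=\frac{ax+cxw}{1-bx-dxw}.$$

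It then remains only to substitute the closed form $xw=\dfrac{1-ux-R}{2v}$ into these two expressions and simplify. For $g$ this gives $g(x)=\dfrac{2v}{2v-2bvx-d(1-ux-R)}=\dfrac{2v}{2v-d+(du-2bv)x+dR}$, which is the stated formula (there is an evident typographical slip — a stray leading factor $d$ and an unbalanced parenthesis — in the displayed $g(x)$). For $f$, the numerator becomes $\dfrac{c+(2av-cu)x-cR}{2v}$ and the denominator is the same $\dfrac{2v-d+(du-2bv)x+dR}{2v}$, so after cancelling $2v$ we obtain exactly
$$f(x)=\frac{c-(cu-2av)x-c\sqrt{1-2(u+2v)x+u^2x^2}}{2v-d+(du-2bv)x+d\sqrt{1-2(u+2v)x+u^2x^2}}.$$
A short check confirms admissibility: using $R=1-(u+2v)x+O(x^2)$ one finds $g(0)=1$, $f(0)=0$, and $f_1=a+c$, so $(g,f)$ is a genuine Riordan array provided $a+c\neq 0$ (and $v\neq 0$, which is implicit in writing the tail as a quadratic).

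The calculation is entirely mechanical once $xw$ has been put in the form $\dfrac{1-ux-R}{2v}$; the only points demanding care are the choice of square‑root branch (fixed by $w(0)=1$), keeping track of signs in the radical's linear coefficient, and the degenerate parameter values $v=0$ or $a+c=0$. Thus the main — and essentially only — obstacle is bookkeeping in the polynomial algebra, not any conceptual difficulty; the structural input (that a bivariate series linear‑fractional in $y$ is automatically the generating function of a Riordan array, with $g$ and $f$ read off via the Corollary) does all the real work.
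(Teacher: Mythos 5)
Your proof follows exactly the paper's own recipe (solve the tail equation $z=1/(1-ux-vxz)$, collapse the continued fraction to $G(x,y)=1/(1-(ay+b)x-(cy+d)xz)$, and read off $g=G(x,0)$ and $f=1-G(x,0)/G(x,1)$), merely carrying out explicitly the algebra the paper leaves implicit; your observation that $G$ is linear--fractional in $y$ makes the verification $G=g/(1-yf)$ immediate rather than a separate check. You are also right that the displayed $g(x)$ in the statement contains a typographical slip (the factor $d$ should multiply only the radical, matching the denominator of $f$), and your corrected form is the one your computation yields.
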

\begin{proof}
We solve the equation
$$z=\frac{1}{1-ux-vx z}$$ to form
$$G(x,y)=\frac{1}{1-(ay+b)x-(cy+d)x z}.$$
We again set $\mathfrak{g}(x)=G(x,0)$ and $\mathfrak{f}(x)=1-\frac{G(x,0)}{G(x,1)}$.
We then confirm that
$$\frac{\mathfrak{g}(x)}{1-y \mathfrak{f}(x)}=G(x,y).$$
We then set $g(x)=\mathfrak{g}(x)$, $f(x)=\mathfrak{f}(x)$.
\end{proof}
In fact, the Thron continued fraction above has an equivalent Jacobi continued fraction expression. This is the content of the next result.
\begin{proposition} We let $G(x,y)$ be the following Thron continued fraction.
$$G(x,y)=\cfrac{1}{1-(ay+b)x-\cfrac{(cy+d)x}{1-ux-\cfrac{vx}{1-ux-\cdots}}}.$$
Then $G(x,y)$ can be expressed as the following Jacobi continued fraction.
$$G(x,y)=\cfrac{1}{1-(b+d+(a+c)y)x-\cfrac{(u+v)(d+cy)x^2}{1-(u+2v)x-\cfrac{v(u+v)x^2}{1-(u+2v)x-\cdots}}}.$$
\end{proposition}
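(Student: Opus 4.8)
The plan is to reduce the claimed equality of a Thron and a Jacobi continued fraction to a single algebraic identity relating their eventually-periodic tails. First I would let $W=W(x)$ be the power series determined by the functional equation $W=\frac{1}{1-ux-vxW}$ (so $W(0)=1$); since all levels of the Thron fraction past the first are constant, unfolding one level gives $G(x,y)=\frac{1}{1-(ay+b)x-(cy+d)x\,W}$. Dually, let $V=V(x)$ be the power series with $V=\frac{1}{1-(u+2v)x-v(u+v)x^2V}$ (again $V(0)=1$), so that the proposed right-hand side equals $\frac{1}{1-(b+d+(a+c)y)x-(u+v)(d+cy)x^2\,V}$. Equating the two continued fractions amounts to equating their denominators; using the elementary identity $(b+d+(a+c)y)-(ay+b)=cy+d$, the difference of the two denominators factors as $(cy+d)x\bigl(1-W+(u+v)xV\bigr)$. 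As $(cy+d)x$ is a nonzero element of the ring of formal power series over $\mathbb{Q}(a,b,c,d,u,v,y)$, the whole proposition reduces to the single identity $W=1+(u+v)xV$.

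Next I would establish this tail identity by hand. From $W=\frac{1}{1-ux-vxW}$ one obtains $W-1=xW(u+vW)$, so $V^{*}:=\frac{W-1}{(u+v)x}=\frac{uW+vW^{2}}{u+v}$ is a genuine power series with $V^{*}(0)=1$ and $xV^{*}=\frac{W-1}{u+v}$. Substituting these two expressions into $V^{*}-(u+2v)xV^{*}-v(u+v)x^{2}(V^{*})^{2}$ and expanding, the $W$- and $W^{2}$-terms all cancel and one is left with $\frac{u+v}{u+v}=1$; hence $V^{*}$ satisfies $v(u+v)x^{2}V^{2}-(1-(u+2v)x)V+1=0$, the same quadratic functional equation as $V$. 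That equation has a unique formal power series solution with constant term $1$ (the other root of the quadratic has a pole at $x=0$), so $V^{*}=V$, i.e.\ $W=1+(u+v)xV$. Together with the reduction of the first paragraph this proves the proposition.

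I do not expect a genuine obstacle here; the only care needed is bookkeeping --- keeping track of which quantities carry the factor $x$ when checking that $V^{*}$ solves the Jacobi tail equation (so that $xV^{*}$ and $x^{2}(V^{*})^{2}$ are substituted correctly), and invoking uniqueness of the power-series solution to select $V^{*}=V$ rather than the spurious branch. The argument parallels the earlier propositions in this section; as an alternative one could compute $G(x,0)$ and $1-G(x,0)/G(x,1)$ separately for each side and check that they coincide, but the tail route is shorter and exhibits the result as the classical even-part contraction of a $T$-fraction, specialised to an eventually constant fraction.
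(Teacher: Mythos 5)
Your proof is correct, but it takes a genuinely different route from the paper's. The paper's proof is a one-line appeal to the preceding propositions: it computes the Riordan array $(g(x),f(x))$ attached to the Thron fraction and the one attached to the Jacobi fraction (via $G(x,0)$ and $1-G(x,0)/G(x,1)$ in each case, which produces the closed forms with $\sqrt{1-2(u+2v)x+u^2x^2}$) and observes that they coincide --- essentially the alternative you mention in your last sentence. You instead never leave the level of the defining functional equations: writing $W=\frac{1}{1-ux-vxW}$ and $V=\frac{1}{1-(u+2v)x-v(u+v)x^2V}$ for the two tails, you use $(b+d+(a+c)y)-(ay+b)=cy+d$ to factor the difference of the two denominators as $(cy+d)x\bigl(1-W+(u+v)xV\bigr)$, reduce everything to the parameter-free tail identity $W=1+(u+v)xV$, and verify that $V^*=\frac{W-1}{(u+v)x}=\frac{uW+vW^2}{u+v}$ satisfies the quadratic $v(u+v)x^2V^2-(1-(u+2v)x)V+1=0$, whose formal power series solution with constant term $1$ is unique (the coefficients are determined recursively; the other root has a pole at $x=0$). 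I checked the cancellation: $uW+vW^2-(u+2v)(W-1)-v(W-1)^2=u+v$, so the argument goes through. What your route buys is that it avoids all radicals and the Riordan-array machinery, isolates the fact that the identity is really the classical even-part contraction of a $T$-fraction with eventually constant coefficients (the $y$-dependence rides along for free at level $0$), and makes the verification a few lines of polynomial algebra; the paper's route, by contrast, reuses the explicit $g$ and $f$ already computed and so fits the surrounding narrative about which Riordan array is being represented. One cosmetic point: the reduction to $W=1+(u+v)xV$ is a sufficiency claim (if the tail identity holds, the denominators agree), so the degenerate case $c=d=0$, where the factor $(cy+d)x$ vanishes, causes no trouble.
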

\begin{proof} We can show that both continued fractions give the generating function of the same Riordan array.
\end{proof}
In particular, the Thron continued fraction
$$\frac{1}{1-(ay+b)x-\cfrac{((1-a)y+d)x}{1-ux-\cfrac{vx}{1-ux-\cdots}}}$$ is equivalent to the Jacobi continued fraction
$$\frac{1}{1-(b+c+y)x-\cfrac{(u+v)(d+y(1-a))x^2}{1-(u+2v)x-\cfrac{v(u+v)x^2}{1-(u+2v)x-\cdots}}}.$$
The Riordan array corresponding to these two expressions will by construction have $1$s on the diagonal.

The foregoing result can be interpreted in terms of the enumeration of colored Motzkin and Schr\"oder paths. For instance, taking $a=2, b=3, c=1-a=-1, d=4, u=1$ and $v=5$, we can say that Schr\"oder paths of length $2n$ with horizontal steps of $1$ color and rise steps of $5$ colors, except at level $0$, where the horizontal steps have $2y+3$ colors and the rise steps have $-y+4$ colors, are equinumerous with Motzkin paths of length $n$ whose horizontal steps have $1+2\cdot5=11$ colors and whose rise steps have $5(1+5)=30$ colors, except at level $0$, where the horizontal steps have $3-1+y=2+y$ colors, and the rise steps have $(1+5)(4+y(-1))=24-6y$ colors.

\section{Examples}
It is possible that a Riordan array has a generating function that can be expressed as a Stieltjes, a Jacobi and a Thron continued fraction. We note that should the generating function have a Stieltjes continued fraction expression, then it will automatically have a Jacobi continued fraction expression.
\begin{example} The following three continued fractions are equal to the generating function $G(x,y)$ of the Riordan array
$$\left(1, \frac{x(1-x)}{1+x}\right)^{-1}=\left(1,\frac{1-x-\sqrt{1-6x+x^2}}{2}\right).$$
\begin{align*}
G(x,y)&=\cfrac{1}{1-\cfrac{xy}{1-\cfrac{2x}{1-\cfrac{x}{1-\cfrac{2x}{1-\cdots}}}}}\\
&=\cfrac{1}{1-yx-\cfrac{2yx^2}{1-3x-\cfrac{2x^2}{1-3x-\cfrac{2x^2}{1-3x-\cdots}}}}\\
&=\cfrac{1}{1-\cfrac{yx}{1-x-\cfrac{x}{1-x-\cfrac{x}{1-x-\cdots}}}}.\end{align*}

This array begins
$$\left(
\begin{array}{cccccc}
 1 & 0 & 0 & 0 & 0 & 0 \\
 0 & 1 & 0 & 0 & 0 & 0 \\
 0 & 2 & 1 & 0 & 0 & 0 \\
 0 & 6 & 4 & 1 & 0 & 0 \\
 0 & 22 & 16 & 6 & 1 & 0 \\
 0 & 90 & 68 & 30 & 8 & 1 \\
\end{array}
\right).$$

The production matrix of this array begins
$$\left(
\begin{array}{cccccc}
 0 & 1 & 0 & 0 & 0 & 0 \\
 0 & 2 & 1 & 0 & 0 & 0 \\
 0 & 2 & 2 & 1 & 0 & 0 \\
 0 & 2 & 2 & 2 & 1 & 0 \\
 0 & 2 & 2 & 2 & 2 & 1 \\
 0 & 2 & 2 & 2 & 2 & 2 \\
\end{array}
\right).$$
The generating function of its inverse $\left(1, \frac{x(1-x)}{1+x}\right)$ can be expressed as the finite Jacobi fraction
$$\frac{1+x}{1+x(1-y)x+x^2y}=\cfrac{1}{1-yx+\cfrac{2yx^2}{1+x}}.$$
We now consider the Riordan array whose production matrix begins
$$\left(
\begin{array}{cccccc}
 1 & 1 & 0 & 0 & 0 & 0 \\
 1 & 2 & 1 & 0 & 0 & 0 \\
 1 & 2 & 2 & 1 & 0 & 0 \\
 1 & 2 & 2 & 2 & 1 & 0 \\
 1 & 2 & 2 & 2 & 2 & 1 \\
 1 & 2 & 2 & 2 & 2 & 2 \\
\end{array}
\right).$$ This is the Riordan array $\left(1+\frac{1-x-\sqrt{1-6x+x^2}}{2},\frac{1-x-\sqrt{1-6x+x^2}}{2}\right)$ that begins
$$\left(
\begin{array}{cccccc}
 1 & 0 & 0 & 0 & 0 & 0 \\
 1 & 1 & 0 & 0 & 0 & 0 \\
 2 & 3 & 1 & 0 & 0 & 0 \\
 6 & 10 & 5 & 1 & 0 & 0 \\
 22 & 38 & 22 & 7 & 1 & 0 \\
 90 & 158 & 98 & 38 & 9 & 1 \\
\end{array}
\right).$$ Its inverse $\left(\frac{1}{1+x}, \frac{x(1-x)}{1+x}\right)$ is the signed version of the Delannoy number triangle that begins
$$\left(
\begin{array}{cccccc}
 1 & 0 & 0 & 0 & 0 & 0 \\
 -1 & 1 & 0 & 0 & 0 & 0 \\
 1 & -3 & 1 & 0 & 0 & 0 \\
 -1 & 5 & -5 & 1 & 0 & 0 \\
 1 & -7 & 13 & -7 & 1 & 0 \\
 -1 & 9 & -25 & 25 & -9 & 1 \\
\end{array}
\right).$$ The generating function of the Riordan array $\left(1+\frac{1-x-\sqrt{1-6x+x^2}}{2},\frac{1-x-\sqrt{1-6x+x^2}}{2}\right)$ can be expressed as the following Jacobi continued fraction.
$$\cfrac{1}{1-(y+1)x-\cfrac{(y+1)x^2}{1-3x-\cfrac{2x^2}{1-3x-\cfrac{2x^2}{1-3x-\cdots}}}}.$$
Its inverse, the signed Delannoy number triangle, has its generating function given by
$$ \frac{1}{1-(y-1)x+yx^2} = \cfrac{1}{1+x-\cfrac{yx}{1+\cfrac{x}{1-x}}},$$
that is, a Jacobi fraction and a Thron fraction (of finite type).
We next look at the Riordan array whose production matrix begins
$$\left(
\begin{array}{cccccc}
 2 & 1 & 0 & 0 & 0 & 0 \\
 2 & 2 & 1 & 0 & 0 & 0 \\
 2 & 2 & 2 & 1 & 0 & 0 \\
 2 & 2 & 2 & 2 & 1 & 0 \\
 2 & 2 & 2 & 2 & 2 & 1 \\
 2 & 2 & 2 & 2 & 2 & 2 \\
\end{array}
\right).$$
This is the Riordan array
$$\left(\frac{1-x-\sqrt{1-6x+x^2}}{2x}, \frac{1-x-\sqrt{1-6x+x^2}}{2}\right)=\left(\frac{1-x}{1+x}, \frac{x(1-x)}{1+x}\right)^{-1}.$$
We can express the generating function of this array as a Jacobi fraction
$$\cfrac{1}{1-(y+2)x-\cfrac{2x^2}{1-3x-\cfrac{2x^2}{1-3x-\cdots}}},$$ or as a Thron continued fraction
$$\cfrac{1}{1-(y+1)x-\cfrac{x}{1-x-\cfrac{x}{1-x-\cdots}}}.$$
The generating function of the inverse array can be expressed in terms of (finite) Jacobi and Thron continued fractions:
$$\cfrac{1}{1+(y-2)x+\cfrac{2x^2}{1-x}}=\cfrac{1}{1-yx+\cfrac{2x}{1+x-2x}}.$$
\end{example}
\begin{example}
We consider the Riordan array $\left(\frac{3x-\sqrt{1-6x+x^2}}{2}, x\frac{3x-\sqrt{1-6x+x^2}}{2}\right)$ whose generating function can be expressed as the Jacobi continued fraction
$$\cfrac{1}{1-(y+1)x-\cfrac{x^2}{1-3x-\cfrac{2x^2}{1-3x-\cdots}}},$$ or by the Thron continued fraction
$$\cfrac{1}{1-xy-\cfrac{x}{1-\cfrac{x}{1-x-\cfrac{x}{1-x-\cdots}}}}.$$
It is thus associated to Schr\"oder paths with no horizontal step at level $1$. This array begins
$$\left(
\begin{array}{cccccc}
 1 & 0 & 0 & 0 & 0 & 0 \\
 1 & 1 & 0 & 0 & 0 & 0 \\
 2 & 2 & 1 & 0 & 0 & 0 \\
 6 & 5 & 3 & 1 & 0 & 0 \\
 22 & 16 & 9 & 4 & 1 & 0 \\
 90 & 60 & 31 & 14 & 5 & 1 \\
\end{array}
\right).$$

Its inverse is the Riordan array $\left(\frac{1}{1+xc(x)}, \frac{x}{1+xc(x)}\right)$ whose generating function may be expressed by the Jacobi continued fraction
$$\cfrac{1}{1-(y-1)x+\cfrac{x^2}{1-2x-\cfrac{x^2}{1-2x-\cfrac{x^2}{1-2x-\cdots}}}}.$$
\end{example}

\section{Riordan involutions}
An \emph{involution} in the Riordan group is an element of order $2$, defined by an element $(g(x),f(x))$ for which we have
$$(g(x), f(x))^2 = (g(x), f(x))\cdot (g(x), f(x))=(g(x) g(f(x)),f(f(x)))=(1,x).$$
Thus we require $\bar{f}(x)=f(x)$, that is, $f$ is its own compositional inverse. Examples of such power series $f(x)$ are given by $f(x)=-\frac{x}{1-x}$ and $f(x)=-\frac{x}{1+x}$. Clearly then $\left(1, -\frac{x}{1-x}\right)$ and $\left(1, -\frac{x}{1+x}\right)$ are involutions in the Riordan group. A family of less trivial ones is given by the following result, based on results on moments and involutions \cite{Inv} associated with the Riordan group.
\begin{proposition} The generating function
$$G(x,y)=\cfrac{1}{1-(-y+2a-2)x+\cfrac{(y-a+1)x^2}{1-ax-\cfrac{bx^2}{1-ax-\cdots}}}$$ is the generating function of the Riordan involution $(g(x), f(x))$ where
$$g(x)=\frac{2b}{1-a+2b+(a-1)(a-4b)x+(a-1)\sqrt{1-2ax+(a^2-4b)x^2}},$$ and
$$f(x)=\frac{\sqrt{1-2ax+(a^2-4b)x^2}+(a-2b)x-1}{1-a+2b+(a-1)(a-4b)x+(a-1)\sqrt{1-2ax+(a^2-4b)x^2}}.$$
In addition, the Riordan array $(g(x), f(x))$ is the coefficient array of the parameterized moments of the family of orthogonal polynomials whose coefficient array is given by the Riordan array
$$\left(\frac{1+(2-a+y)x+(-a+b+1+y)x^2}{1+ax+bx^2}, \frac{x}{1+ax+bx^2}\right).$$
\end{proposition}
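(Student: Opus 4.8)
The plan is to follow the template of the earlier propositions in this section (solve a quadratic for the tail of the continued fraction, substitute, read off the Riordan pair), then extract the orthogonal-polynomial interpretation from the dictionary recalled in the Introduction, and finally obtain the involution property, which is the delicate point.

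\emph{Identifying $(g,f)$.} First I would let $u=u(x)$ be the solution with $u(0)$ finite of $u=\dfrac{1}{1-ax-bx^2u}$, namely $u=\dfrac{1-ax-\sqrt{1-2ax+(a^2-4b)x^2}}{2bx^2}$, so that the displayed continued fraction collapses to
$$G(x,y)=\frac{1}{1-(-y+2a-2)x+(y-a+1)x^2u(x)}=\frac{1}{A(x)+yB(x)},$$
with $A(x)=1-(2a-2)x+(1-a)x^2u(x)$ and $B(x)=x\bigl(1+xu(x)\bigr)$. Then $G(x,y)=\dfrac{1/A(x)}{1-y\bigl(-B(x)/A(x)\bigr)}$ is already in Riordan form $\dfrac{g(x)}{1-yf(x)}$, so $g=1/A$ and $f=-B/A$; substituting the closed form of $u$, writing $R(x)=\sqrt{1-2ax+(a^2-4b)x^2}$, and recognizing the coefficient of $x$ in the simplified denominator as $(a-1)(a-4b)$ recovers the stated formulas. (Quick checks: $g(0)=1$, $f(0)=0$, and $f'(0)=-1$, the latter being the signature of a non-identity Riordan involution.) This step is routine, in the spirit of the earlier propositions.

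\emph{The moment interpretation.} By the Introduction, $\left(\frac{1+\alpha x+\beta x^2}{1+ax+bx^2},\frac{x}{1+ax+bx^2}\right)$ is the coefficient array of the orthogonal polynomials satisfying $P_n(x)=(x-a)P_{n-1}(x)-bP_{n-2}(x)$ with the $\alpha,\beta$-modified low-order data, and the generating function of the first column of its inverse — the moment sequence — is $\cfrac{1}{1-(a-\alpha)x-\cfrac{(b-\beta)x^2}{1-ax-\cdots}}$. Taking $\alpha=\alpha(y)=2-a+y$ and $\beta=\beta(y)=b-a+1+y$, one computes $a-\alpha(y)=2a-2-y$, which is the coefficient $-y+2a-2$ in $G$, and $b-\beta(y)=a-1-y$, so that $-(b-\beta(y))x^2=(y-a+1)x^2$ matches the $+(y-a+1)x^2$ in $G$; hence this Jacobi fraction is exactly $G(x,y)$. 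Therefore the parameterized moments satisfy $\mu_n(y)=[x^n]G(x,y)=\sum_k a_{n,k}y^k$, where $a_{n,k}=[x^n]g(x)f(x)^k$ are the entries of $(g(x),f(x))$; equivalently $a_{n,k}=[y^k]\mu_n(y)$, which is precisely the claim that $(g(x),f(x))$ is the coefficient array of the parameterized moments.

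\emph{The involution property.} This is where the real work lies. The intended route is to invoke \cite{Inv}: for the orthogonal family just identified, $\alpha(y)=2-a+y$ and $\beta(y)=b-a+1+y$ are affine in $y$ with slope $1$ and intercept shifts $\alpha(y)-a=-2(a-1)+y$ and $\beta(y)-b=-(a-1)+y$ in ratio $2:1$ — exactly the parameterized shape for which \cite{Inv} shows the associated parameterized moment array is a Riordan involution — so the conclusion transfers. As a self-contained alternative, one eliminates the surd: from $f\cdot\bigl[(2b+1-a)+(a-1)(a-4b)x+(a-1)R\bigr]=R+(a-2b)x-1$ solve for $R$, square, and obtain a polynomial relation $P(x,w)=0$ (satisfied by $w=f(x)$) that is quadratic in each variable; the symmetry $P(x,w)=P(w,x)$ makes this algebraic relation invariant under inversion, forcing $\bar f=f$ by uniqueness of the branch through the origin, and the parallel manipulation of $g\cdot A=1$ gives $g(x)g(f(x))=1$. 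I expect the main obstacle to be exactly here: either pinning down the precise parameterized family of \cite{Inv} and checking our $\alpha(y),\beta(y)$ lies in it, or, if one works from scratch, carrying out the surd elimination and verifying the $x\leftrightarrow w$ symmetry of $P$ — which is the genuine content of the involution claim — while the rest is bookkeeping best delegated to a computer algebra check.
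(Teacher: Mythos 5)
The paper states this proposition without any proof, deferring entirely to the citation \cite{Inv}; so there is no ``paper proof'' to match, only the template of the earlier propositions in Section 3. Your identification of $(g,f)$ follows exactly that template and is correct: writing the collapsed fraction as $1/(A(x)+yB(x))$ with $A=1-(2a-2)x+(1-a)x^2u$ and $B=x(1+xu)$ gives $g=1/A$, $f=-B/A$, and substituting $x^2u=\frac{1-ax-R}{2b}$ does produce the stated closed forms (the coefficient of $x$ in the denominator simplifies to $(a-1)(a-4b)$ as you say). Your moment identification is also correct and is the cleanest part of the argument: with $\alpha=2-a+y$, $\beta=-a+b+1+y$ one gets $a-\alpha=2a-2-y$ and $-(b-\beta)=y-a+1$, so $G(x,y)$ is precisely the Jacobi fraction that the Introduction assigns to the first column of the inverse of the orthogonal-polynomial coefficient array.

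On the involution claim, your first route (invoke \cite{Inv}) is exactly what the paper itself does, so no complaint there beyond the fact that neither you nor the paper exhibits the precise statement being imported. Your self-contained alternative is the more interesting contribution, and its skeleton is right: eliminating the surd does yield a polynomial relation $P(x,w)=0$ symmetric in $x$ and $w$ (e.g.\ for $a=b=1$ one gets $w^2+wx+x^2+w+x=0$), and since $P(0,w)=0$ has $w=0$ as a simple root, only one branch passes through the origin, forcing $\bar f=f$. But two points need tightening. First, you should state the branch-uniqueness step explicitly (symmetry alone only tells you that $x$ is \emph{one} of the two roots of $P(f(x),\cdot)=0$; you need the simple-root-at-the-origin observation to conclude it is the branch $f\circ f$). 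Second, and more seriously, ``the parallel manipulation of $g\cdot A=1$ gives $g(x)g(f(x))=1$'' is not an argument: $g\cdot A=1$ is true by definition of $g$ and says nothing about $g(f(x))$. The identity $g(x)\,g(f(x))=1$ is a genuinely separate computation (one must substitute $f(x)$ into the surd $R$ and verify that $R(f(x))$ simplifies against $R(x)$), and without it you have only shown that $(1,f)$ is an involution, not $(g,f)$. So either carry that computation through, or lean fully on \cite{Inv} for the involution clause as the paper does.
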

\begin{example} When $a=b=1$, we obtain the Riordan involution
$$\left(1, \frac{\sqrt{1-2x-3x^2}-x-1}{2}\right)$$ which begins
$$\left(
\begin{array}{cccccc}
 1 & 0 & 0 & 0 & 0 & 0 \\
 0 & -1 & 0 & 0 & 0 & 0 \\
 0 & -1 & 1 & 0 & 0 & 0 \\
 0 & -1 & 2 & -1 & 0 & 0 \\
 0 & -2 & 3 & -3 & 1 & 0 \\
 0 & -4 & 6 & -6 & 4 & -1 \\
\end{array}
\right).$$
The sequence of polynomials for which this matrix is the coefficient array thus begins
$$1, -y, y^2 - y, - y^3 + 2y^2 - y, y^4 - 3y^3 + 3y^2 - 2y,\ldots.$$
This sequence forms the first column in the inverse of the Riordan array
$$\left(\frac{1+(y+1)x+(y+1)x^2}{1+x+x^2}, \frac{x}{1+x+x^2}\right).$$
We note that the production matrix of $\left(\frac{1+(y+1)x+(y+1)x^2}{1+x+x^2}, \frac{x}{1+x+x^2}\right)^{-1}$ is the tri-diagonal matrix that begins
$$\left(
\begin{array}{ccccc}
 -y & 1 & 0 & 0 & 0 \\
 -y & 1 & 1 & 0 & 0 \\
 0 & 1 & 1 & 1 & 0 \\
 0 & 0 & 1 & 1 & 1 \\
 0 & 0 & 0 & 1 & 1 \\
\end{array}
\right).$$
When $y=-1$ (and $a=b=1$) the moment sequence is given by the Motzkin numbers. When $y=-2$ (and $a=b=1$) we obtain the sequence $1, 2, 6, 18, 56, 176, 558,\ldots$ with generating function $$\frac{1}{\sqrt{1-2x-3x^2}-x}
=\cfrac{1}{1-2x-\cfrac{2x^2}{1-x-\cfrac{x}{1-x-\cdots}}}.$$

This sequence therefore counts Motzkin paths of length $n$ where the horizontal and rise steps at level $0$ both have two colors.
\end{example}
\begin{example} We consider the case $a=b=2$. We find that
$$(g(x), f(x))=\left(\frac{3(1-2x)-\sqrt{1-4x-4x^2}}{2(1-4x+5x^2)},\frac{(1-x)\sqrt{1-4x-4x^2}+4x^2+x-1}{2(1-4x+5x^2)}\right).$$ The involution $(g(x), f(x))$ begins
$$\left(
\begin{array}{cccccc}
 1 & 0 & 0 & 0 & 0 & 0 \\
 2 & -1 & 0 & 0 & 0 & 0 \\
 5 & -5 & 1 & 0 & 0 & 0 \\
 14 & -20 & 8 & -1 & 0 & 0 \\
 43 & -76 & 44 & -11 & 1 & 0 \\
 142 & -287 & 210 & -77 & 14 & -1 \\
\end{array}
\right).$$
We have
$$G(x,y)=\cfrac{1}{1-(2-y)x-\cfrac{(1-y)x^2}{1-2x-\cfrac{2x^2}{1-2x-\cdots}}},$$ and thus we have
$$g(x)=G(x,0)=\cfrac{1}{1-2x-\cfrac{x^2}{1-2x-\cfrac{2x^2}{1-2x-\cdots}}}.$$
We have
$$\left(
\begin{array}{cccccc}
 1 & 0 & 0 & 0 & 0 & 0 \\
 2 & -1 & 0 & 0 & 0 & 0 \\
 5 & -5 & 1 & 0 & 0 & 0 \\
 14 & -20 & 8 & -1 & 0 & 0 \\
 43 & -76 & 44 & -11 & 1 & 0 \\
 142 & -287 & 210 & -77 & 14 & -1 \\
\end{array}
\right)\left(\begin{array}{c}
1\\
-1\\
1\\
-1\\
1\\
-1\\ \end{array}\right)=
\left(\begin{array}{c}
1\\
3\\
11\\
43\\
175\\
731\\ \end{array}\right).$$
Thus the sequence $1,3,11,43,175,\ldots$ with generating function
$$\frac{3(1-2x)-\sqrt{1-4x-4x^2}}{1-7x+14x^2+(1-x)\sqrt{1-4x-4x^2}}$$ or equivalently
$$\cfrac{1}{1-3x-\cfrac{2x^2}{1-2x-\cfrac{2x^2}{1-2x-\cdots}}}$$
counts Motzkin paths whose horizontal and rise steps after level $0$ both have $2$ colors, and whose horizontal steps at level $0$ have $-(-1)+2=3$ colors and whose rise steps at level $0$ have $1-(-1)=2$ colors. This is sequence \seqnum{A151090} \cite{Chang}.
\end{example}

\section{Laurent biorthogonal polynomials}
To better understand Thron continued fractions, we give an example of a Riordan array which defines a family of Laurent biorthogonal polynomials, whose moment sequence has its generating function given by a Thron continued fraction \cite{LBP, Sawa1, Sawa2}.
\begin{example}
We consider the family of Laurent biorthogonal polynomials defined by the constant coefficient recurrence 
$$P_n(x)=(x+1) P_{n-1}(x)-3xP_{n-2}(x),$$ with $P_0(x)=1$, $P_1(x)=x-1$. These polynomials begin
$$1, x - 1, x^2 - 3x - 1, x^3 - 5x^2 - x - 1, x^4 - 7x^3 + 3x^2 + x - 1,\ldots.$$ The coefficient array of these polynomials is the Riordan array $\left(\frac{1-2x}{1-x}, \frac{x(1-3x)}{1-x}\right)$ which begins
$$\left(
\begin{array}{cccccc}
 1 & 0 & 0 & 0 & 0 & 0 \\
 -1 & 1 & 0 & 0 & 0 & 0 \\
 -1 & -3 & 1 & 0 & 0 & 0 \\
 -1 & -1 & -5 & 1 & 0 & 0 \\
 -1 & 1 & 3 & -7 & 1 & 0 \\
 -1 & 3 & 7 & 11 & -9 & 1 \\
\end{array}
\right).$$ The inverse array is given by $\left(\frac{4}{3+x+\sqrt{1-10x+x^2}}, \frac{1-x-\sqrt{1-10x+x^2}}{4x}\right)$ which begins
$$\left(
\begin{array}{cccccc}
 1 & 0 & 0 & 0 & 0 & 0 \\
 1 & 1 & 0 & 0 & 0 & 0 \\
 4 & 3 & 1 & 0 & 0 & 0 \\
 22 & 16 & 5 & 1 & 0 & 0 \\
 142 & 102 & 32 & 7 & 1 & 0 \\
 1006 & 718 & 226 & 52 & 9 & 1 \\
\end{array}
\right).$$ 
The first column sequence of this array, which begins 
$$1, 1, 4, 22, 142, 1006, 7570, 59410, 480910,\ldots,$$ is thus the moment sequence of the Laurent biorthogonal polynomials $P_n(x)$. We can express the generating function $g(x)=\frac{4}{3+x+\sqrt{1-10x+x^2}}$ of this sequence in the following Thron continued fraction form.
$$\cfrac{1}{1-\cfrac{x}{1-x-\cfrac{2x}{1-x-\cfrac{2x}{1-x-\cdots}}}}.$$ 
It is of interest to note that $g(x)$ can also be expressed as a Stieltjes continued fraction, 
$$g(x)=\cfrac{1}{1-\cfrac{x}{1-\cfrac{3x}{1-\cfrac{2x}{1-\cfrac{3x}{1-\cdots}}}}},$$ from which it follows that it also has the following Jacobi continued fraction expression.
$$g(x)=\cfrac{1}{1-x-\cfrac{3x^2}{1-5x-\cfrac{6x^2}{1-5x-\cfrac{6x^2}{1-5x-\cdots}}}}.$$ 
The generating function $G(x,y)$ of the moment matrix $\left(\frac{4}{3+x+\sqrt{1-10x+x^2}}, \frac{1-x-\sqrt{1-10x+x^2}}{4x}\right)$ will then have the following Jacobi continued fraction expression.
$$G(x,y)=\cfrac{1}{1-(y+1)x-\cfrac{(y+3)x^2}{1-5x-\cfrac{6x^2}{1-5x-\cdots}}}.$$
We can also represent this as a Thron continued fraction. We have the following.
$$G(x,y)=\cfrac{1}{1-\frac{2}{3}yx-\cfrac{\frac{y+3}{3}x}{1-x-\cfrac{2x}{1-x-\cfrac{2x}{1-x-\cdots}}}}.$$
Both of these expressions evaluate to 
$$G(x,y)=\frac{12}{9-y+(3-7y)x+(y+3)\sqrt{1-10x+x^2}}.$$ 
\end{example}
Just as there is a close relationship between Jacobi continued fractions, orthogonal polynomials and Hankel determinants, there is a close relationship between Thron continued fractions, Laurent biorthogonal polynomials, and Toeplitz determinants \cite{LBP}.

\section{From ordinary to exponential Riordan arrays}
An exponential Riordan array is defined by a pair of formal power series
$$g(x)=g_0 + g_1 \frac{x}{1!}+ g_2 \frac{x^2}{2!}+ \cdots,$$
$$f(x)=f_1 \frac{x}{1!} + f_2 \frac{x^2}{2!} + \cdots,$$ where again we specify that
$g_0 \ne 0$, $f_0=0$ and $f_1 \ne 0$. In this case, the matrix associated to the pair $(g(x), f(x))$ will have its $(n,k)$-th element given by
$$\frac{n!}{k!} [x^n] g(x)f(x)^k.$$
When $g(x)$ and $f(x)$ are exponential generating functions we write $[g(x), f(x)]$ to denote this exponential Riordan array.
\begin{example} The exponential Riordan array $\left[e^x, x\right]$ is the binomial matrix, since we have
$$\frac{n!}{k!} [x^n] e^x x^k = \frac{n!}{k!} [x^{n-k}] e^x =\frac{n!}{k!} \frac{1}{(n-k)!}=\binom{n}{k}.$$
\end{example}
We now consider the ordinary Riordan array
$$\left(\frac{1-x-\sqrt{1-6x+x^2}}{2x}, \frac{1-x-\sqrt{1-6x+x^2}}{2}\right)=\left(\frac{1-x}{1+x}, \frac{x(1-x)}{1+x}\right)^{-1}.$$
The generating function of this array can be expressed as the Jacobi continued fraction
$$\cfrac{1}{1-(y+2)x-\cfrac{2x^2}{1-3x-\cfrac{2x^2}{1-3x-\cfrac{2x^2}{1-3x-\cdots}}}}.$$
We now consider the number array whose (ordinary) generating function is given by the Jacobi continued fraction
$$\cfrac{1}{1-(y+2)x-\cfrac{2\cdot 1x^2}{1-(y+5)x-\cfrac{2\cdot4x^2}{1-(y+8)x-\cfrac{2\cdot 9x^2}{1-(y+11)x-\cdots}}}}.$$
Here, we have replaced the coefficients of $x$, given by $y+2,3,3,\ldots$, by their partial sums, and we have introduced factors $1,4,9,\ldots$ to the coefficients of $x^2$. The resulting number triangle begins
$$\left(
\begin{array}{ccccccc}
 1 & 0 & 0 & 0 & 0 & 0 & 0 \\
 2 & 1 & 0 & 0 & 0 & 0 & 0 \\
 6 & 4 & 1 & 0 & 0 & 0 & 0 \\
 26 & 18 & 6 & 1 & 0 & 0 & 0 \\
 150 & 104 & 36 & 8 & 1 & 0 & 0 \\
 1082 & 750 & 260 & 60 & 10 & 1 & 0 \\
 9366 & 6492 & 2250 & 520 & 90 & 12 & 1 \\
\end{array}
\right).$$
This is the exponential Riordan array $\left[\frac{e^x}{2-e^x}, x\right]$. Its first column, with exponential generating function $\frac{e^x}{2-e^x}$, is sequence \seqnum{A000629} (the binomial transform of the Fubini numbers \seqnum{A000670}), which counts the number of necklaces of partitions of $n+1$ labeled beads. The row sums of this matrix are the binomial transform of this sequence, namely \seqnum{A007047}, which gives the number of chains in the power set of an $n$-set.

If instead of the multipliers $1,4,9,\ldots$ we use the multipliers $1,3,6,10,\ldots$ we obtain the number triangle that begins
$$\left(
\begin{array}{cccccc}
 1 & 0 & 0 & 0 & 0 & 0 \\
 2 & 1 & 0 & 0 & 0 & 0 \\
 6 & 4 & 1 & 0 & 0 & 0 \\
 26 & 18 & 6 & 1 & 0 & 0 \\
 146 & 104 & 36 & 8 & 1 & 0 \\
 994 & 730 & 260 & 60 & 10 & 1 \\
\end{array}
\right).$$
This is the Riordan array $$\left[\frac{10(3\sqrt{5}+7)e^{-x}e^{3\sqrt{5}x}}{(2e^{2\sqrt{5}x}-(3\sqrt{5}+7)e^{\sqrt{5}x})^2}, x\right]$$

whose row sums (essentially \seqnum{A230008})
$$1, 3, 11, 51, 295, 2055,\ldots$$

are the (exponential) revert transform of $n![x^n]\frac{1}{1+3x+x^2}$.

Finally, if we change the multipliers to $1,2,3,\ldots$ we find that the generating function
$$G(x,y)=\cfrac{1}{1-(y+2)x-\cfrac{2\cdot 1x^2}{1-(y+5)x-\cfrac{2\cdot2x^2}{1-(y+8)x-\cfrac{2\cdot 3x^2}{1-(y+11)x-\cdots}}}}$$ is the (ordinary) generating function of the exponential Riordan array
$$\left[  e^{\frac{2e^{3x}}{9}+\frac{4x}{3}-\frac{2}{9}}, x\right].$$ This array begins
$$\left(
\begin{array}{cccccc}
 1 & 0 & 0 & 0 & 0 & 0 \\
 2 & 1 & 0 & 0 & 0 & 0 \\
 6 & 4 & 1 & 0 & 0 & 0 \\
 26 & 18 & 6 & 1 & 0 & 0 \\
 142 & 104 & 36 & 8 & 1 & 0 \\
 906 & 710 & 260 & 60 & 10 & 1 \\
\end{array}
\right).$$
We formalize this result in the following proposition.
\begin{proposition}
The exponential Riordan array
$$\left[e^{\frac{2e^{3x}}{9}+x\left(y+\frac{4}{3}\right)-\frac{2}{9}}, \frac{1}{3}\left(3^{3x}-1\right)\right]$$ is the moment array of the family of orthogonal polynomials whose moments have the generating function $G(x,y)$ above.
\end{proposition}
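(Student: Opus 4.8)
The plan is to identify the array in the statement through its production matrix. Set $\alpha_n=y+2+3n$ and $\beta_n=2n$; by Flajolet's fundamental lemma, the Jacobi continued fraction $G(x,y)$ is the ordinary generating function of the moment sequence $(\mu_n(y))_{n\ge 0}$ of the monic orthogonal polynomials $P_n(x;y)$ defined by
$$P_{n+1}(x;y)=(x-\alpha_n)P_n(x;y)-\beta_n P_{n-1}(x;y),\qquad P_0=1,\ P_{-1}=0.$$
By the standard change-of-basis argument recalled in the introduction, the \emph{moment array} of this family (the inverse of the coefficient array of the $P_n(x;y)$) is the unique lower triangular array with $1$'s on the diagonal whose production matrix is the tridiagonal ``Jacobi matrix'' $J$ carrying $(\alpha_n)$ on the main diagonal, $1$'s on the superdiagonal, and $(\beta_n)$ on the subdiagonal. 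So it is enough to verify that the exponential Riordan array $[g(x),f(x)]$ with $g$ and $f$ as stated has production matrix equal to $J$.

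First I would record the shape of the production matrix of a general exponential Riordan array $[g,f]$. If $c_k(x)=g(x)f(x)^k/k!$ denotes the exponential generating function of its $k$-th column, then the defining relation $\overline{A}=AP$ (deleting the top row equals right multiplication by the production matrix) becomes $c_k'(x)=\sum_j P_{j,k}\,c_j(x)$; since $c_k'=(g'/g)\,c_k+f'\,c_{k-1}$, the matrix $P$ is banded exactly when $f'=F(f)$ and $g'/g=R(f)$ for formal power series $F,R$, and tridiagonal exactly when $\deg R\le 1$ and $\deg F\le 2$. Expanding $R(z)=R_0+R_1z$ and $F(z)=F_0+F_1z+F_2z^2$ in the basis $\{c_j\}$ gives the entries: the superdiagonal is the constant $F_0$, the diagonal is $P_{k,k}=R_0+F_1k$, and the subdiagonal is $P_{k+1,k}=R_1(k+1)+F_2\,k(k+1)$.

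It then remains to compute $F$ and $R$ for the array in the proposition. Differentiating $f(x)=\tfrac13(e^{3x}-1)$ gives $f'(x)=e^{3x}=3f(x)+1$, so $F(z)=1+3z$, that is $F_0=1,\ F_1=3,\ F_2=0$; and differentiating the exponent of $g$ gives $g'(x)/g(x)=\tfrac23 e^{3x}+y+\tfrac43=2f(x)+(y+2)$, so $R(z)=(y+2)+2z$, that is $R_0=y+2,\ R_1=2$. Substituting into the formulas above, the production matrix of $[g,f]$ is tridiagonal with superdiagonal $1$, diagonal $R_0+F_1k=y+2+3k=\alpha_k$, and subdiagonal $R_1(k+1)+F_2k(k+1)=2(k+1)=\beta_{k+1}$, which is exactly $J$. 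This proves the proposition; indeed the same computation, run backwards (impose $F_0=1$, $R_0+F_1k=y+2+3k$, $R_1(k+1)+F_2k(k+1)=2(k+1)$, then integrate $f'=1+3f$ and $g'/g=(y+2)+2f$ with $f(0)=0$, $g(0)=1$), is how one produces $f$ and $g$ in the first place.

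The step I expect to require the most care is the bookkeeping in the second paragraph: getting the production-matrix formula for \emph{exponential} Riordan arrays right, including the factorials, and stating cleanly the (standard) fact that a lower triangular array with unit diagonal is the moment array of an orthogonal polynomial family precisely when its production matrix is the associated Jacobi matrix. A hands-on alternative that sidesteps this machinery is to exhibit the coefficient array of the $P_n(x;y)$ directly: it is the exponential Riordan array $[\,e^{-2x/3}(1+3x)^{-(3y+4)/9},\ \tfrac13\log(1+3x)\,]$, whose rows have bivariate exponential generating function $e^{-2t/3}(1+3t)^{(3x-3y-4)/9}$; from this generating function one checks the three-term recurrence with $\alpha_n=y+2+3n$, $\beta_n=2n$ by a direct differentiation, and then inverting via $[u,v]^{-1}=[1/u(\bar v),\bar v]$ returns the $[g,f]$ of the statement.
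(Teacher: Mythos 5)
Your proposal is correct and follows essentially the same route as the paper: the paper's proof also computes the production matrix of $[g,f]$ (via $Z(x)=g'(\bar f)/g(\bar f)=y+2+2x$ and $A(x)=f'(\bar f)=1+3x$, giving the generating function $e^{xz}(y+2+2x+z(1+3x))$) and reads off the same tridiagonal Jacobi matrix with diagonal $y+2+3k$, subdiagonal $2(k+1)$ and superdiagonal $1$. You merely re-derive the production-matrix formula from the column relation $c_k'=(g'/g)c_k+f'c_{k-1}$ instead of citing it, and your reading of $\frac{1}{3}(3^{3x}-1)$ as $\frac{1}{3}(e^{3x}-1)$ is the correct interpretation of what is evidently a typo in the statement.
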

\begin{proof}
The production matrix of the exponential Riordan array $[g(x), f(x)]$ has its generating function given by
$$e^{xz}(Z(x)+zA(x)),$$ where
$$Z(x)=\frac{g'(\bar{f})}{g(\bar{f})}, \quad A(x)=f'(\bar{f}).$$
With $g(x)=e^{\frac{2e^{3x}}{9}+x\left(y+\frac{4}{3}\right)-\frac{2}{9}}$ and $f(x)=\frac{1}{3}\left(3^{3x}-1\right)$, we obtain the expression
$$e^{xz}(y+2+2x+z(1+3x))$$ for the generating function of the production matrix of the exponential Riordan array of the proposition \cite{Prod}. This expands to give the tri-diagonal matrix that begins
$$\left(
\begin{array}{cccccc}
 y+2 & 1 & 0 & 0 & 0 & 0 \\
 2 & y+5 & 1 & 0 & 0 & 0 \\
 0 & 4 & y+8 & 1 & 0 & 0 \\
 0 & 0 & 6 & y+11 & 1 & 0 \\
 0 & 0 & 0 & 8 & y+14 & 1 \\
 0 & 0 & 0 & 0 & 10 & y+17 \\
\end{array}
\right).$$
The result follows from this.
\end{proof}
\begin{corollary} We have that
$$G(x,y)=\cfrac{1}{1-(y+2)x-\cfrac{2\cdot 1x^2}{1-(y+5)x-\cfrac{2\cdot2x^2}{1-(y+8)x-\cfrac{2\cdot 3x^2}{1-(y+11)x-\cdots}}}}$$ is the (ordinary) generating function of the exponential Riordan array
$$\left[  e^{\frac{2e^{3x}}{9}+\frac{4x}{3}-\frac{2}{9}}, x\right].$$
\end{corollary}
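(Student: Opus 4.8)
The plan is to read the Corollary off the preceding Proposition by unwinding the definition of an exponential Riordan array. First I would fix notation: write $g(x)=e^{\frac{2e^{3x}}{9}+\frac{4x}{3}-\frac29}$ for the first component of the array $A:=\left[g(x),x\right]$ in the Corollary, and let $L$ denote the exponential Riordan array of the Proposition. Since $\frac{2e^{3x}}{9}+x\left(y+\frac43\right)-\frac29=\left(\frac{2e^{3x}}{9}+\frac{4x}{3}-\frac29\right)+xy$, the first component of $L$ is exactly $e^{xy}g(x)$ (its second component will turn out to be irrelevant). By the Proposition, $L$ is the moment array of the stated family of orthogonal polynomials whose moment generating function is $G(x,y)$; equivalently, the proof of the Proposition exhibits the tridiagonal production matrix of $L$, and the correspondence between tridiagonal production matrices and Jacobi continued fractions recalled in the introduction then identifies the ordinary generating function of the first column of $L$ with $G(x,y)$. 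So it suffices to show that this first-column generating function equals the bivariate ordinary generating function $\sum_{n,k}a_{n,k}x^ny^k$ of $A$.

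This is a short computation once the definitions are in hand. Writing $g(x)=\sum_{m\ge0}g_m\frac{x^m}{m!}$, the $(n,k)$-entry of $A=[g(x),x]$ is $\frac{n!}{k!}[x^{n-k}]g(x)=\binom{n}{k}g_{n-k}$, so the $n$-th row polynomial of $A$ is $\sum_k\binom{n}{k}g_{n-k}y^k$. On the other hand, the first column of any exponential Riordan array $[h(x),f(x)]$ depends only on $h$, its $n$-th entry being $n![x^n]h(x)$; taking $h(x)=e^{xy}g(x)$ gives $n![x^n]\!\left(e^{xy}g(x)\right)=\sum_{j+m=n}\frac{n!}{j!\,m!}y^jg_m=\sum_k\binom{n}{k}g_{n-k}y^k$. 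Hence the first column of $L$, read as a sequence of polynomials in $y$, is precisely the sequence of row polynomials of $A$; summing against $x^n$ and invoking the Proposition yields $\sum_{n,k}a_{n,k}x^ny^k=G(x,y)$, which is the assertion of the Corollary.

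I do not expect a genuine obstacle here — the mathematical content sits in the Proposition, and the Corollary is a change of bookkeeping. The one point worth emphasizing is that replacing the first component $g(x)$ of $[g(x),x]$ by $e^{xy}g(x)$ converts the whole triangle of row polynomials of $A$ into the single first column of $L$, and this works precisely because the first column of an exponential Riordan array is insensitive to its second component; thus the genuinely different second components of $L$ and $A$ play no role, and the only extra input beyond the Proposition is the factorisation of the exponent noted in the first paragraph. A numerical sanity check against the displayed triangle of $\left[e^{\frac{2e^{3x}}{9}+\frac{4x}{3}-\frac29},x\right]$ (row polynomials $1,\ 2+y,\ 6+4y+y^2,\ 26+18y+6y^2+y^3,\dots$ matching the coefficients of $x^0,x^1,x^2,x^3$ in $G(x,y)$) confirms the identification before writing the argument out in full.
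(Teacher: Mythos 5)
Your proposal is correct and follows essentially the same route as the paper: both arguments rest on the Proposition to identify the first column of the moment array with $G(x,y)$, and then on factoring the exponent as $\bigl(\tfrac{2e^{3x}}{9}+\tfrac{4x}{3}-\tfrac29\bigr)+xy$ so that the resulting exponential generating function $g(x)e^{yx}$ is recognized as the bivariate generating function of $[g(x),x]$. The only difference is presentational — the paper simply quotes the formula $g(x)e^{yf(x)}$ for the bivariate generating function of an exponential Riordan array (with a typo, $e^{yg(x)}$ for $e^{yf(x)}$), whereas you rederive it by the explicit coefficient computation $n!\,[x^n]\,e^{xy}g(x)=\sum_k\binom{n}{k}g_{n-k}y^k$.
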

\begin{proof} The generating function of the exponential Riordan array $[g(x), f(x)]$ is given by
$g(x)e^{yg(x)}$. By the proposition, $G(x,y)$ is the (ordinary) generating function moment sequence whose exponential generating function is given by $e^{\frac{2e^{3x}}{9}+x\left(y+\frac{4}{3}\right)-\frac{2}{9}}$.
Writing this as $e^{\frac{2e^{3x}}{9}+x\left(\frac{4}{3}\right)-\frac{2}{9}}e^{yx}$ we see that it has the required form.
\end{proof}
The same method of proof assures us that the results stated above for the multipliers $1,3,6,\ldots$ and $1,4,9,\ldots$ also hold.

\section{The Narayana and related number triangles and continued fractions}
By way of contrast with the foregoing sections which treated Riordan arrays, in this section we look at some combinatorially important number triangles which are not Riordan arrays. The common feature of these triangles is that their generating functions are given by (parameterized) continued fractions. Again, we meet Stieltjes, Jacobi and Thron continued fractions. We let $\mathbf{B}$ denote the binomial matrix, which represents the Riordan array $\left(\frac{1}{1-x}, \frac{x}{1-x}\right)$.

The Narayana numbers $N(n,k)=\frac{1}{k+1}\binom{n}{k}\binom{n+1}{k}$ form the triangle $\mathbf{N}$ \seqnum{A001263} that begins
$$\left(
\begin{array}{cccccc}
 1 & 0 & 0 & 0 & 0 & 0 \\
 1 & 1 & 0 & 0 & 0 & 0 \\
 1 & 3 & 1 & 0 & 0 & 0 \\
 1 & 6 & 6 & 1 & 0 & 0 \\
 1 & 10 & 20 & 10 & 1 & 0 \\
 1 & 15 & 50 & 50 & 15 & 1 \\
\end{array}
\right).$$
Its generating function is given by the Jacobi continued fraction \cite{Nara}
$$N(x,y)=\cfrac{1}{1-(y+1)x-\cfrac{yx^2}{1-(y+1)x-\cfrac{yx^2}{1-(y+1)x-\cdots}}}.$$ We can also express this generating function as a Thron continued fraction.
\begin{proposition} We have
$$N(x,y)=\cfrac{1}{1-x-\cfrac{yx}{1+(y-1)x-\cfrac{yx}{1+(y-1)x-\cfrac{yx}{1+(y+1)x-\cdots}}}}.$$
\end{proposition}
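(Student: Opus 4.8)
The plan is to reduce the stated Thron identity to the Narayana Jacobi continued fraction already recorded above, which is the quickest route given the tools assembled in the excerpt. The displayed fraction is periodic below its top level: after the initial level $1-x$ the repeating block is $\frac{yx}{1+(y-1)x-\cdots}$, with the later displayed level read as a further instance of this same block. In the notation of the earlier Thron-to-Jacobi proposition, whose Thron input is $\frac{1}{1-(ay+b)x-\frac{(cy+d)x}{1-ux-\frac{vx}{1-ux-\cdots}}}$, our fraction corresponds to $a=0$, $b=1$, $c=1$, $d=0$, $u=1-y$, $v=y$. That proposition then outputs the Jacobi fraction with $b_0=b+d+(a+c)y=y+1$, $\lambda_1=(u+v)(d+cy)=y$, $b_n=u+2v=y+1$ and $\lambda_n=v(u+v)=y$, which is exactly the Jacobi fraction for $N(x,y)$ given above; so the two fractions coincide and the identity follows.

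For a self-contained argument that avoids the conversion proposition, I would instead match defining quadratics. Because the Jacobi fraction for $N(x,y)$ is periodic, its value satisfies $N=\frac{1}{1-(y+1)x-yx^2N}$, i.e. $yx^2N^2+((y+1)x-1)N+1=0$, and $N(x,y)$ is its unique power-series solution with $N(0,y)=1$. On the Thron side, letting $W$ denote the value of the repeating tail gives the self-similar relation $W=\frac{1}{1+(y-1)x-yxW}$, hence $yxW^2-(1+(y-1)x)W+1=0$, while the top level reads $N=\frac{1}{1-x-yxW}$. I would then solve the latter for $yxW=(1-x)-\frac1N$ and substitute into the tail quadratic to eliminate $W$.

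The elimination is the computational heart and the step I expect to be the main obstacle. Writing $A=(1-x)N-1$, the tail quadratic becomes $A^2-(1+(y-1)x)AN+yxN^2=0$, and after expanding $A$ I must confirm that the coefficient of $N^2$, namely $(1-x)^2-(1+(y-1)x)(1-x)+yx$, collapses to $yx^2$, that the coefficient of $N$ reduces to $(y+1)x-1$, and that the constant term is $1$. The delicate point is that the $(y-1)$ carried by the Thron denominators must recombine with the cross terms so that only $(y+1)$ survives in the final coefficients; a lone sign slip here would pass unnoticed, so I would collect these contributions carefully. Once both fractions are seen to satisfy $yx^2N^2+((y+1)x-1)N+1=0$ with the same normalization, uniqueness of the power-series solution forces them to agree, establishing the claimed identity.
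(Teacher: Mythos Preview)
Your second, self-contained argument is essentially the paper's own proof, just reorganized: the paper solves the tail equation $z=\frac{1}{1+(y-1)x-yx\,z}$ for the branch with $z(0)=1$, forms $\frac{1}{1-x-yx\,z}$, and simplifies to $N(x,y)$; you instead eliminate $W$ and show the result satisfies the same quadratic $yx^2N^2+((y+1)x-1)N+1=0$, then invoke uniqueness of the power-series branch. The algebra you outline checks out (in particular $(1-x)^2-(1+(y-1)x)(1-x)+yx=yx^2$ and $-2(1-x)+(1+(y-1)x)=(y+1)x-1$), and your reading of the displayed last level as another $(y-1)$ block is the intended one.

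Your first route---specializing the earlier Thron-to-Jacobi proposition with $a=0$, $b=1$, $c=1$, $d=0$, $u=1-y$, $v=y$---is a genuinely different and slicker argument: it avoids any quadratic manipulation and lands directly on $\mathcal{J}(y+1,y+1,\ldots;y,y,\ldots)$. The only point to flag is that the paper's proof of that proposition goes through the Riordan-array identification (computing $G(x,0)$ and $G(x,1)$), which tacitly treats $u,v$ as independent of the column variable $y$; by taking $u=1-y$, $v=y$ you are formally outside that proof's scope. The identity itself is nonetheless valid---it is the standard even contraction of a $T$-fraction into a $J$-fraction, which holds coefficientwise regardless of how the weights depend on auxiliary parameters---so a one-line remark to that effect would make the citation airtight. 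What your first approach buys is economy; what the paper's (and your second) approach buys is self-containment, since it does not lean on the conversion lemma at all.
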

\begin{proof} We calculate $z$ where
$$z=\frac{1}{1+(y-1)x-yx z},$$ taking the value with $z(0)=1$.
We then simplify $\frac{1}{1-x-yx z}$, which turns out to equal $N(x,y)$.
\end{proof}
We note that the continued fraction
$$\cfrac{1}{1-\cfrac{yx}{1+(y-1)x-\cfrac{yx}{1+(y-1)x-\cfrac{yx}{1+(y-1)x-\cdots}}}}$$ expands to give the version of the Narayana numbers that begins
$$\left(
\begin{array}{cccccc}
 1 & 0 & 0 & 0 & 0 & 0 \\
 0 & 1 & 0 & 0 & 0 & 0 \\
 0 & 1 & 1 & 0 & 0 & 0 \\
 0 & 1 & 3 & 1 & 0 & 0 \\
 0 & 1 & 6 & 6 & 1 & 0 \\
 0 & 1 & 10 & 20 & 10 & 1 \\
\end{array}
\right).$$

The number triangle $\mathbf{N}\mathbf{B}$ will have its generating function given by the Jacobi continued fraction
$$\cfrac{1}{1-(y+2)x-\cfrac{(y+1)x^2}{1-(y+2)x-\cfrac{(y+1)x^2}{1-(y+2)x-\cdots}}}.$$
This triangle begins
$$\left(
\begin{array}{cccccc}
 1 & 0 & 0 & 0 & 0 & 0 \\
 2 & 1 & 0 & 0 & 0 & 0 \\
 5 & 5 & 1 & 0 & 0 & 0 \\
 14 & 21 & 9 & 1 & 0 & 0 \\
 42 & 84 & 56 & 14 & 1 & 0 \\
 132 & 330 & 300 & 120 & 20 & 1 \\
\end{array}
\right).$$

The $(n,k)$-th element of this array gives the number of Schr\"oder paths of semi-length $n$ containing exactly $k$ peaks but no peaks at level one (\seqnum{A126216}). We have the following result.

\begin{proposition} The generating function of the product $\mathbf{N} \mathbf{B}$ can be expressed as the following Thron continued fraction.
$$\cfrac{1}{1-x-\cfrac{(y+1)x}{1+yx-\cfrac{(y+1)x}{1+yx - \cdots}}}.$$
\end{proposition}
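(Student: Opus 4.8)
The plan is to follow exactly the method of the preceding Thron expansion of $N(x,y)$: introduce a single unknown for the periodic tail of the Thron fraction, reduce its defining equation to a quadratic, and then recognize the whole fraction as the generating function of $\mathbf{N}\mathbf{B}$, which we already know from its Jacobi continued fraction. Concretely, write $H(x,y)$ for the generating function of $\mathbf{N}\mathbf{B}$; by the periodicity of the Jacobi continued fraction displayed above, $H$ is the formal power series with $H(0,y)=1$ solving $(y+1)x^2 H^2 - (1-(y+2)x)H + 1 = 0$. So it suffices to show that the Thron fraction in the statement satisfies this same quadratic and takes the value $1$ at $x=0$; uniqueness of the power-series root then gives equality.

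First I would set $w = w(x,y)$ equal to the purely periodic part $\cfrac{1}{1+yx - \cfrac{(y+1)x}{1+yx - \cdots}}$, so that $w$ is the branch of $(y+1)x\,w^2 - (1+yx)w + 1 = 0$ with $w(0,y)=1$, and the continued fraction of the proposition equals $T(x,y) := 1/\bigl(1 - x - (y+1)x\,w\bigr)$. Put $p := (y+1)x\,w$; multiplying the equation for $w$ by $(y+1)x$ turns it into the tidier relation $p^2 - (1+yx)p + (y+1)x = 0$, while the definition of $T$ gives $p = 1 - x - 1/T$. Substituting this expression for $p$ and clearing denominators, the identity collapses --- after collecting the $1/T^2$, $1/T$ and constant terms separately --- to $(y+1)x^2 T^2 - (1-(y+2)x)T + 1 = 0$. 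Since $w(0,y)=1$ forces $p(0,y)=0$ and hence $T(0,y)=1$, the series $T$ is precisely the power-series root of the quadratic characterizing $H$, so $T = H$, as claimed.

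The computation is entirely routine; the only points needing care are the branch choices. As a formal power series, the periodic tail $w$ is the unambiguous fixed point obtained by iterating $w \mapsto 1/(1+yx-(y+1)x w)$ from $w_0 = 1$, so the $w(0,y)=1$ branch is the correct one; and one must check that $T$, rather than the competing Laurent-series solution of $(y+1)x^2 z^2 - (1-(y+2)x)z + 1 = 0$, is the one picked out, which is exactly the role of the normalization $T(0,y)=1$. I would also remark --- paralleling the comment after the earlier Narayana proposition --- that discarding the initial partial numerator $(y+1)x$ from the Thron fraction yields the shifted Narayana triangle with leading column $1,0,0,\dots$, and that the general Thron-to-Jacobi dictionary established earlier does in fact reproduce the known Jacobi expansion of $\mathbf{N}\mathbf{B}$ when one formally takes its tail parameters to be $u=-y$, $v=y+1$, though a direct verification is cleaner since those parameters are not constants.
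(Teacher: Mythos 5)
Your proof is correct and follows essentially the same route as the paper's: introduce the periodic tail $u$ of the Thron fraction via $u = 1/(1+yx-(y+1)xu)$, form $1/(1-x-(y+1)xu)$, and identify the result with the generating function coming from the Jacobi expression for $\mathbf{N}\mathbf{B}$. Your only (harmless, and arguably cleaner) variation is to match the two generating functions by showing they satisfy the same quadratic with the same normalization $T(0,y)=1$, rather than by simplifying the explicit closed forms with radicals.
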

\begin{proof} We solve the equation
$$u=\frac{1}{1+yx-(y+1)x u}$$ to obtain the generating function in the form
$$\frac{1}{1-x-(y+1)x u}.$$
Simplifying, we can show that this is the same as the generating function obtained from the Jacobi expression for the generating function of the triangle.
\end{proof}
Related to this is the following result.
\begin{proposition} The triangle that begins
$$\left(
\begin{array}{cccccc}
 1 & 0 & 0 & 0 & 0 & 0 \\
 1 & 0 & 0 & 0 & 0 & 0 \\
 2 & 1 & 0 & 0 & 0 & 0 \\
 5 & 5 & 1 & 0 & 0 & 0 \\
 14 & 21 & 9 & 1 & 0 & 0 \\
 42 & 84 & 56 & 14 & 1 & 0 \\
\end{array}
\right),$$  which counts little $q$-Schr\"oder paths, has a generating function given by each of the following three equivalent continued fractions.
$$\cfrac{1}{1-\cfrac{x}{1-\cfrac{(y+1)x}{1-\cfrac{x}{1-\cfrac{(y+1)x}{1-\cfrac{x}{1-\cdots}}}}}}.$$
$$\cfrac{1}{1-\cfrac{x}{1-yx-\cfrac{x}{1-yx-\cfrac{x}{1-yx-\cdots}}}}.$$
$$\cfrac{1}{1-x-\cfrac{(y+1)x^2}{1-(y+2)x-\cfrac{(y+1)x^2}{1-(y+2)x-\cdots}}}.$$
\end{proposition}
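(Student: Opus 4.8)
The plan is to prove two statements: that the three displayed continued fractions all represent one and the same power series $F(x,y)$, and that this $F(x,y)$ is the bivariate generating function of the displayed triangle. For the second part I would reduce everything to the formula already established for the generating function of $\mathbf{N}\mathbf{B}$.

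\textbf{Equivalence of the three continued fractions.} Each of the three is a well-defined element of $\mathbb{Q}(y)[[x]]$, since all partial numerators have positive $x$-valuation, and each has constant term $1$. I would derive, for each one, the quadratic equation satisfied by its value $F$. For the first (Stieltjes) fraction the partial numerators repeat with period two, $x,(y+1)x,x,(y+1)x,\dots$, so peeling off two levels gives $F=\dfrac{1}{1-\frac{x}{1-(y+1)xF}}$, which clears to
$$(y+1)xF^{2}-(1+yx)F+1=0.$$
For the second (Thron) fraction, set $v=\dfrac{1}{1-yx-xv}$ for the value of the self-similar recurring block, so that $F=\dfrac{1}{1-xv}$; substituting $v=\frac{F-1}{xF}$ into the relation for $v$ returns the same quadratic. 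For the third (Jacobi) fraction, set $u=\dfrac{1}{1-(y+2)x-(y+1)x^{2}u}$, so that $F=\dfrac{1}{1-x-(y+1)x^{2}u}$; solving the $u$-relation for $(y+1)x^{2}u$ and substituting into the outer relation again yields $(y+1)xF^{2}-(1+yx)F+1=0$. Since at $x=0$ this equation reads $1-F=0$ and the coefficient of $F^{2}$ is divisible by $x$, it has a unique solution in $\mathbb{Q}(y)[[x]]$ with constant term $1$; hence the three fractions coincide. (Alternatively, the passage from the first to the third fraction is an instance of the classical even-contraction identity, which converts partial numerators $1,y+1,1,y+1,\dots$ into the $x$-coefficients $1,y+2,y+2,\dots$ and the $x^{2}$-coefficients $y+1,y+1,\dots$.)

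\textbf{Identification with the triangle.} Comparing the displayed array with the array for $\mathbf{N}\mathbf{B}$ recorded earlier in the paper, the present triangle is $\mathbf{N}\mathbf{B}$ with the extra row $(1,0,0,\dots)$ prepended; equivalently its generating function satisfies $T(x,y)=1+x\,(\mathbf{N}\mathbf{B})(x,y)$. By the proposition already proved, $(\mathbf{N}\mathbf{B})(x,y)$ equals the periodic Jacobi fraction $\dfrac{1}{1-(y+2)x-\frac{(y+1)x^{2}}{1-(y+2)x-\cdots}}$, and this is exactly the series $u$ from the previous step. A one-line computation shows that $1+xu=\dfrac{1}{1-x-(y+1)x^{2}u}$ is equivalent to $u=\dfrac{1}{1-(y+2)x-(y+1)x^{2}u}$, so $T(x,y)=1+xu=F(x,y)$, and therefore $T(x,y)$ is given by each of the three continued fractions. (If instead the triangle is taken as defined combinatorially by little $q$-Schr\"oder paths, the second fraction drops out of Flajolet's fundamental lemma for $T$-fractions, with level-step weight $0$ on the axis, level-step weight $y$ above it, and fall weight $1$; the identification with $\mathbf{N}\mathbf{B}$ shifted by one row is then confirmed by the first column $1,1,2,5,14,\dots$ at $y=0$ and the row sums $1,1,3,11,45,\dots$ at $y=1$.)

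\textbf{Main obstacle.} The genuine content is light: the elimination of the auxiliary series $u$ and $v$ is routine algebra, provided one keeps track that $u$ and $v$ are the branches with constant term $1$, so that the uniqueness argument for the quadratic really applies. The one point meriting a sentence of justification is the identity $T(x,y)=1+x\,(\mathbf{N}\mathbf{B})(x,y)$ beyond the finitely many displayed rows: this follows either from the combinatorial description (deleting the forced initial up-step and its matching down-step carries a little $q$-Schr\"oder path of semi-length $n$ to a peak-avoiding Schr\"oder path of semi-length $n-1$, which is what $\mathbf{N}\mathbf{B}$ enumerates) or from checking that the two triangles satisfy the same three-term row recurrence.
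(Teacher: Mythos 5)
Your argument is correct: the paper states this proposition without proof, and your derivation of the common quadratic $(y+1)xF^{2}-(1+yx)F+1=0$ from each of the three fractions, together with the observation that the third fraction equals $1+xu$ where $u$ is the Jacobi fraction already identified as the generating function of $\mathbf{N}\mathbf{B}$, is exactly the ``solve for the tail, substitute, and compare'' method the paper uses for the neighbouring propositions. Nothing further is needed.
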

Thus we have Stieltjes, Thron and Jacobi continued fractions expressing the generating functions of this triangle. We note that Yang and Yang \cite{YangYang} have provided a bijection between $(q+2,q+1)$-Motzkin paths of length $n$ and small $q$-Schr\"oder paths of semi-length $n+1$ corresponding to the above two propositions.

Now applying $\mathbf{B}^{-1}$ on the left, we form the number triangle
$$ \mathbf{B}^{-1} \mathbf{N} \mathbf{B}$$ to obtain the number triangle whose generating function is given by the following Jacobi continued fraction.
$$\cfrac{1}{1-(y+1)x-\cfrac{(y+1)x^2}{1-(y+1)x-\cfrac{(y+1)x^2}{1-(y+1)x-\cdots}}}.$$ This binomial conjugate of the Narayana triangle begins
$$\left(
\begin{array}{cccccc}
 1 & 0 & 0 & 0 & 0 & 0 \\
 1 & 1 & 0 & 0 & 0 & 0 \\
 2 & 3 & 1 & 0 & 0 & 0 \\
 4 & 9 & 6 & 1 & 0 & 0 \\
 9 & 26 & 26 & 10 & 1 & 0 \\
 21 & 75 & 100 & 60 & 15 & 1 \\
\end{array}
\right).$$ This is \seqnum{A177896}. The elements of the first column are the Motzkin numbers, and its row sums \seqnum{A071356}, which begin
$$1, 2, 6, 20, 72, 272, 1064, 4272, 17504, 72896,\ldots$$ count Motzkin paths in which both the horizontals and the rises have $2$ colors. The diagonal sums \seqnum{A078481} of this matrix begin
$$1, 1, 3, 7, 19, 53, 153, 453, 1367, 4191, 13015,\ldots.$$
It follows that this sequence has a generating function given by the continued fraction
$$\cfrac{1}{1-(1+x)x-\cfrac{(1+x)x^2}{1-(1+x)x-\cfrac{(1+x)x^2}{1-(1+x)x-\cdots}}}.$$
This generating function also has a Thron continued fraction expression, given by
$$\cfrac{1}{1+x-\cfrac{2x}{1+x-\cfrac{x}{1+x-\cfrac{2x}{1+x-\cdots}}}}.$$
This shows it to be the transform of the large Schr\"oder numbers $S_n$ \cite{Trans} given by
$$\sum_{k=0}^n \binom{n+k}{2k}(-1)^{n-k} S_k.$$ The sequence counts  Dyck paths of semi-length $n$ with no $UDUD$. Equivalently, it counts Schr\"oder paths of length $2n$ whose rises alternate between having $2$ and $1$ colors, and whose horizontals have a weight of $-1$.

\section{Conclusions} We have shown that certain Riordan arrays have generating functions that are expressible as Jacobi or Thron continued fractions. The feature of these Riordan arrays is that they are related to counting lattice paths whose steps at level $0$ are privileged. This effect is conveyed through the second variable of the bivariate generating function. In contrast, other non-Riordan number triangles such as the Narayana triangle, also associated to lattice paths, carry the dependence on the second variable beyond level $0$.

\bigskip
\hrule

\noindent 2010 {\it Mathematics Subject Classification}:
Primary 15B36; Secondary 05A15, 11B83, 11J70, 11A55, 42C05.
\noindent \emph{Keywords:} Riordan array, Lattice path, Stieltjes continued fraction, Jacobi continued fraction, Thron continued fraction, Narayana numbers, generating function, orthogonal polynomial, Laurent biorthogonal polynomial

\begin{thebibliography}{99}


\bibitem{Inv} P. Barry, Chebyshev moments and Riordan involutions, Preprint (2019), \url{https://arxiv.org/abs/1912.11845}.
    
\bibitem{LBP} P. Barry, Constant coefficient Laurent biorthogonal
polynomials, Riordan arrays and moment
sequences, Preprint (2019), \url{https://arxiv.org/abs/1906.06370}.

\bibitem{OP} P. Barry and A. M. Mwafise, Classical and semi-classical orthogonal polynomials defined by Riordan arrays, and their moment sequences, \emph{J. Integer Seq.}, \textbf{21} (2018), Article 18.1.5.
    
\bibitem{Book} P. Barry, \emph{Riordan Arrays: a Primer}, Logic Press, 2017.

\bibitem{Prod} P. Barry, Constructing exponential Riordan arrays from their $A$ and $Z$ sequences, \emph{J. Integer Seq.}, \textbf{17} (2014), Article 14.2.6.

\bibitem{Nara} P. Barry and A. Hennessy, A note on Narayana triangles and related
polynomials, Riordan arrays, and MIMO
capacity calculations, \emph{J. Integer Seq.}, \textbf{14} (2011), Article 11.3.8.

\bibitem{Trans} P. Barry, Continued fractions and transformations of integer sequences, \emph{J. Integer Seq.}, \textbf{12} (2009), Article 09.7.6.

\bibitem{Chang} X.-K. Chang, X.-B. Hu, H. Lei, and Y.-N. Yeh, Combinatorial proofs of addition formulas, \emph{Electron. J. Combin.}, \textbf{23} (2016), \#P1.8.

\bibitem{Flajolet} P. Flajolet, Combinatorial aspects of continued fractions, \emph{Discrete Math.}, \textbf{32}
(1980), 125–-161.

\bibitem{Josuat} M. Josuat-Verg\`es, A $q$-analog of Schl\"afli and Gould identities on Stirling numbers, \emph{Ramanujan J.}, \textbf{46} (2018), 483–-507.

\bibitem{Oste} R. Oste and J. Van der Jeugt, Motzkin paths, Motzkin polynomials and recurrence relations, \emph{Electron. J. Combin.}, \textbf{22} (2015), \#P2.8.

\bibitem{Sokal} M. P\'etr\'eolle, A. D. Sokal, B-X. Zhu, Lattice paths and branched continued fractions:
An infinite sequence of generalizations of the Stieltjes--Rogers and Thron--Rogers polynomials,
with coefficientwise Hankel-total positivity, Preprint (2018), \url{https://arxiv.org/pdf/1807.03271.pdf}.

\bibitem{Sawa1} K. Sawa and Y. Nakamura, Application of the Lanczos-Phillips algorithm to continued fractions and its extension with orthogonal polynomials,  \emph{JSIAM Lett.}, \textbf{10} (2018), 57--60.

\bibitem{Sawa2} K. Sawa and Y. Nakamura, Extension of the Lanczos-Phillips algorithm with Laurent biorthogonal polynomials and its application to the Thron continued fractions, \emph{JSIAM Lett.}, \textbf{12} (2020), 1--4.

\bibitem{SGWW} L. W. Shapiro, S. Getu, W-J. Woan, and L.C. Woodson,
The Riordan group, \emph{Discr. Appl. Math.}, \textbf{34} (1991),
 229--239.

\bibitem{SL1} N. J. A.~Sloane, \emph{The
On-Line Encyclopedia of Integer Sequences}. Published electronically
at \texttt{http://oeis.org}, 2021.

\bibitem{SL2} N. J. A.~Sloane, The On-Line Encyclopedia of Integer
Sequences, \emph{Notices Amer. Math. Soc.}, \textbf{50} (2003),  912--915.

\bibitem{Viennot} X. Viennot, Introduction to chapter $3$ on continued fractions, Preprint (2013), \url{https://www.stat.purdue.edu/~mdw/ChapterIntroductions/ContinuedFractionsUpdateViennot.pdf}.

\bibitem{YangYang} L. Yang and S.-L. Yang, A relation between Schr\"oder paths and Motzkin paths, \emph{Graphs Combin.}, \textbf{36} (2020), 1489--1502.

\end{thebibliography}
\end{document}